\newtheorem{theorem}{Theorem}
\newtheorem{lemma}[theorem]{Lemma}
\theoremstyle{remark}
\newtheorem{remark}{Remark}
\theoremstyle{definition}
\newcommand{\argmin}{\mathop{\mathrm{argmin}}}
\def\E{\mathbb{E}}
\def\P{\mathbb{P}}
\def\col{\mathrm{col}}
\def\row{\mathrm{row}}
\def\diag{\mathrm{diag}}
\def\hbeta{\hat{\beta}}
\def\htheta{\hat{\theta}}
\def\halpha{\hat{\alpha}}
\def\R{\mathbb{R}}
\def\cE{\mathcal{E}}
\def\cH{\mathcal{H}}
\def\cK{\mathcal{K}}
\def\cS{\mathcal{S}}
\def\cT{\mathcal{T}}
\def\1{\mathds{1}}
\def\MSE{\mathrm{MSE}}
\def\BR{C} 
\begin{document}

\title{Total Variation Classes Beyond 1d: Minimax Rates, and the
  Limitations of Linear Smoothers}

\author{Veeranjaneyulu Sadhanala$^*$ \and
Yu-Xiang Wang$^*$ \and  Ryan J. Tibshirani}

\date{Carnegie Mellon University \\
{(\it $^*$These authors contributed equally})}

\maketitle

\begin{abstract}
We consider the problem of estimating a function defined over
$n$ locations on a $d$-dimensional grid (having all side lengths equal
to \smash{$n^{1/d}$}).  When the function is constrained to
have discrete total variation bounded by $\BR_n$, we derive the
minimax optimal (squared) $\ell_2$ estimation error rate, parametrized
by $n$ and $\BR_n$. Total variation denoising, also known as the fused
lasso, is seen to be rate optimal.  Several simpler estimators exist,
such as Laplacian smoothing and Laplacian eigenmaps.  A natural
question is: can these simpler estimators perform just as well?  We
prove that
these estimators, and more broadly all estimators given by linear
transformations of the input data, are suboptimal over the class of
functions with bounded variation. This extends fundamental findings
of \citet{minimaxwave} on 1-dimensional total variation spaces to
higher dimensions.  The implication is that the computationally
simpler methods cannot be used for such sophisticated denoising tasks,
without sacrificing statistical accuracy.
We also derive minimax rates for discrete Sobolev
spaces over $d$-dimensional grids, which are, in some sense,
smaller than the total variation function spaces.  Indeed, these are
small enough spaces that linear estimators can be optimal---and a few
well-known ones are, such as Laplacian smoothing and Laplacian
eigenmaps, as we show.  Lastly, we investigate the problem of
adaptivity of the total variation denoiser to these smaller Sobolev
function spaces.
\end{abstract}

\section{Introduction}

Let $G=(V,E)$ be a $d$-dimensional grid graph, i.e., a lattice graph,
with equal side lengths.  Label the nodes as $V=\{1,\ldots,n\}$, and
edges as $E=\{e_1,\ldots,e_m\}$. Consider data $y=(y_1,\ldots,y_n) \in
\R^n$ observed over the nodes, from a model
\begin{equation}
\label{eq:model}
y_i \sim N(\theta_{0,i}, \sigma^2), \quad
\text{i.i.d., for $i=1,\ldots,n$},
\end{equation}
where $\theta_0=(\theta_{0,1},\ldots,\theta_{0,n})\in \R^n$ is an
unknown mean parameter to be estimated, and $\sigma^2>0$ is the
marginal noise variance.  It is assumed that $\theta_0$ displays some
kind of regularity over the grid $G$, e.g., \smash{$\theta_0 \in
  \cT_d(\BR_n)$} for some $\BR_n>0$, where
\begin{equation}
\label{eq:tv_class}
\cT_d(\BR_n) = \big\{ \theta : \|D\theta\|_1 \leq \BR_n \big\},
\end{equation}
and $D \in \R^{m \times n}$ is the edge incidence matrix of $G$.  This
has $\ell$th row $D_\ell=(0,\ldots,-1,\ldots,1,\ldots,0)$, with a
$-1$ in the $i$th location, and $1$ in the $j$th location, provided
that the $\ell$th edge is $e_\ell=(i,j)$ with $i<j$.  Equivalently,
$L=D^T D$ is the graph Laplacian matrix of $G$, and thus
\begin{equation*}
\|D \theta\|_1 = \sum_{(i,j) \in E} |\theta_i-\theta_j|,
\qquad \text{and} \qquad
\|D\theta\|_2^2 = \theta^T L \theta =
\sum_{(i,j) \in E} (\theta_i-\theta_j)^2.
\end{equation*}
We will refer to the class in \eqref{eq:tv_class} as a {\it discrete
total variation (TV) class}, and to the quantity
$\|D\theta_0\|_1$ as the discrete total variation of $\theta_0$,
though for simplicity we will often drop the word ``discrete''.

The problem of estimating $\theta_0$ given a total variation
bound as in \eqref{eq:tv_class} is of great importance in both
nonparametric statistics and signal processing, and has many
applications, e.g., changepoint detection for 1d grids, and image
denoising for 2d and 3d
grids.  There has been much methodological and computational work
devoted to this problem, resulting in practically efficient estimators
in dimensions 1, 2, 3, and beyond.  However, theoretical performance,
and in particularly optimality, is only really well-understood in the
1-dimensional setting.  This paper seeks to change that, and offers
theory in $d$-dimensions that parallel more classical results known in
the 1-dimensional case.

\paragraph{Estimators under consideration.}

Central role to our work is the
{\it total variation (TV) denoising} or {\it fused lasso} estimator
(e.g.,
\cite{tv,vogel1996iterative,
  fuse,fuseflow,hoefling2010path,
  genlasso,sharpnack2012sparsistency,
  barbero2014modular}),
defined by the convex optimization problem
\begin{equation}
\label{eq:tv}
\htheta{^\mathrm{TV}} = \argmin_{\theta \in \R^n} \;
\|y-\theta\|_2^2 + \lambda \|D\theta\|_1,
\end{equation}
where $\lambda \geq 0$ is a tuning parameter.  Another pair of
methods that we study carefully are {\it Laplacian smoothing}
and {\it Laplacian eigenmaps}, which are most commonly seen in the
context of clustering, dimensionality reduction, and semi-supervised
learning, but are also useful tools for estimation in a regression
setting like ours (e.g.,
\cite{belkin2002,belkin2003,smola2003,zhu2003,
  belkin2004,zhou2005,belkin2005,
  belkin2005towards,ando2006,
  sharpnack2010}).
The Laplacian smoothing estimator is given by
\begin{equation}
\label{eq:ls}
\htheta{^\mathrm{LS}} = \argmin_{\theta \in \R^n} \;
\|y-\theta\|_2^2 + \lambda \|D\theta\|_2^2,
\quad \text{i.e.,} \quad
\htheta{^\mathrm{LS}} = (I + \lambda L)^{-1} y,
\end{equation}
for a tuning parameter $\lambda \geq 0$, where in the second expression
we have written \smash{$\htheta^{\mathrm{LS}}$} in closed-form
(this is possible since it is the minimizer of a convex quadratic).
For Laplacian eigenmaps, we must introduce the
eigendecomposition of the graph Laplacian, $L=V \Sigma V^T$, where
$\Sigma=\diag(\rho_1,\ldots,\rho_n)$ with $0=\rho_1<
\rho_2 \leq \ldots \leq \rho_n$, and where
$V=[V_1,V_2,\ldots,V_n] \in \R^{n\times n}$ has orthonormal columns.
The Laplacian eigenmaps estimator is
\begin{equation}
\label{eq:le}
\htheta^{\mathrm{LE}} = V_{[k]} V_{[k]}^T y,
\quad \text{where} \quad
V_{[k]} = [V_1,V_2,\ldots,V_k] \in \R^{n\times k},
\end{equation}
where now $k \in \{1,\ldots,n\}$ acts as a tuning parameter.

Laplacian smoothing and Laplacian eigenmaps are appealing because
they are (relatively) simple: they are just linear transformations of
the data $y$.
Indeed, as we are considering $G$ to be a grid, both estimators in
\eqref{eq:ls}, \eqref{eq:le} can be computed
very quickly, in nearly $O(n)$ time, since the columns of $V$ here
are discrete cosine transform (DCT) basis vectors when
$d=1$, or Kronecker products thereof, when $d \geq 2$ (e.g.,
\cite{conte1980,godunov1987,
	kunsch1994,ng1999,wang2008}).
The TV denoising estimator in \eqref{eq:tv}, on the other hand, cannot
be expressed in closed-form, and is much more difficult to compute,
especially when $d \geq 2$, though several advances have been made
over the years (see the references above, and in particular
\cite{barbero2014modular} for an efficient operator-splitting
algorithm and nice literature survey). Importantly, these
computational difficulties are often worth it: TV denoising often
practically outperforms $\ell_2$-regularized estimators like Laplacian
smoothing (and also Laplacian eigenmaps) in image denoising tasks, as
it is able to better preserve sharp edges and object boundaries
(this is now widely accepted, early references are, e.g.,
\cite{acar1994,dobson1996,chambolle1997}).
See Figure \ref{fig:motivation} for an example, using the
often-studied ``cameraman'' image.

\begin{figure}[tb]
  \centering
  \begin{tabular}{ccc}
    Noisy image & Laplacian smoothing & TV denoising \\
    {\includegraphics[width=0.3\textwidth]{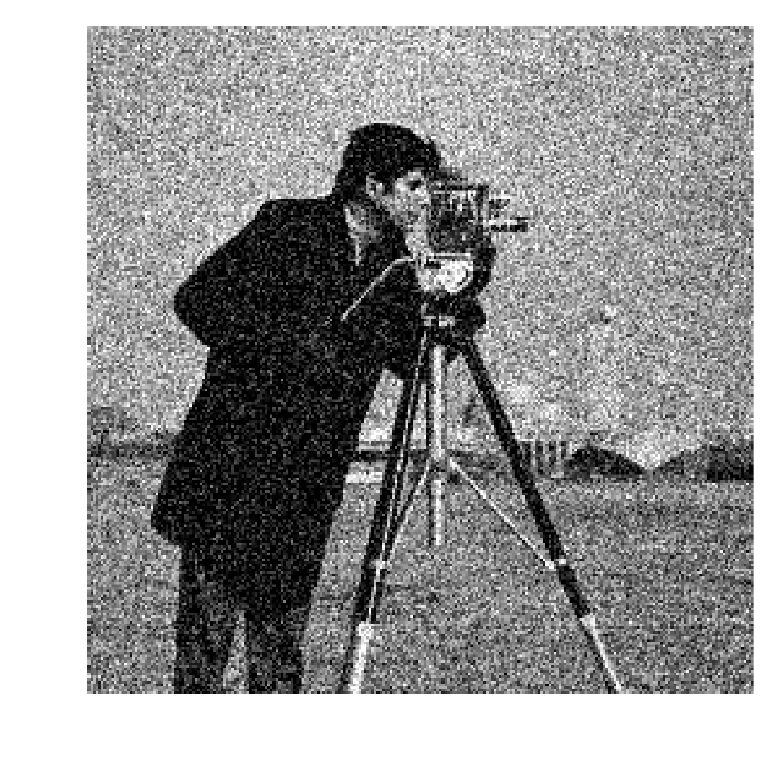}} &
    {\includegraphics[width=0.3\textwidth]{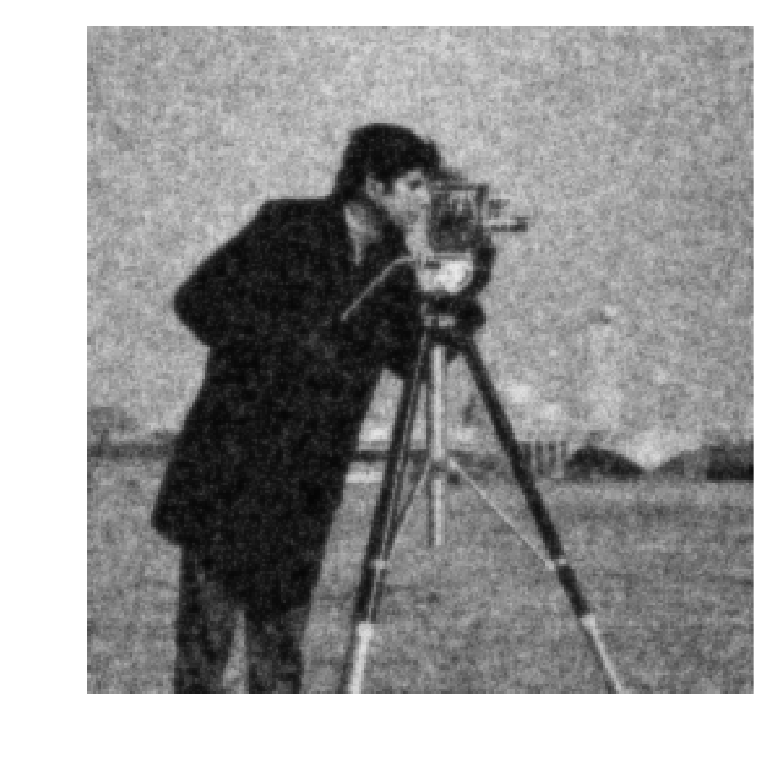}} &
    {\includegraphics[width=0.3\textwidth]{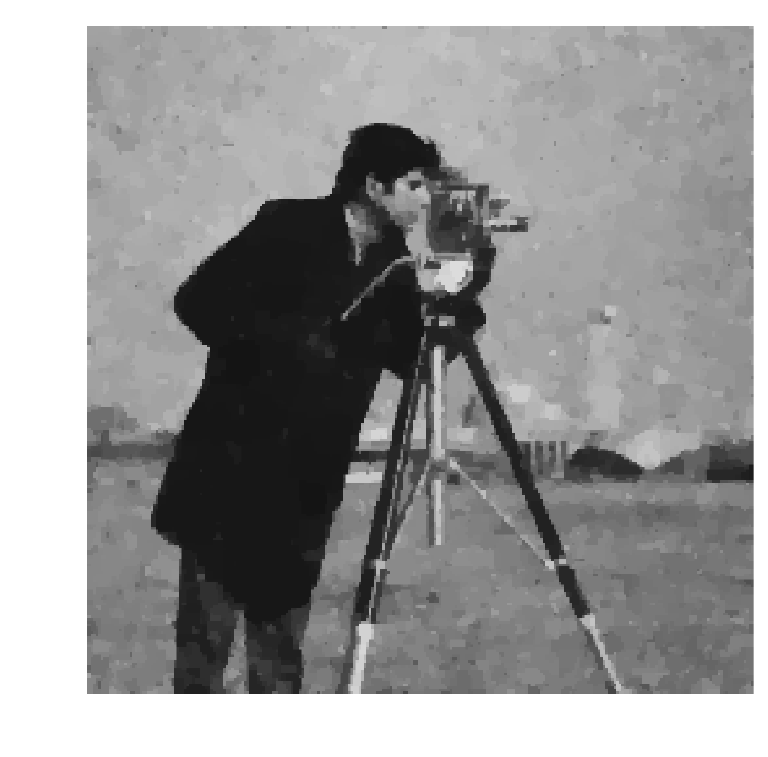}}
  \end{tabular}
  \vspace{-10pt}	
  \caption{\small\it Comparison of Laplacian smoothing and TV
    denoising for the common``cameraman'' image. TV denoising provides
    a more visually appealing result, and also achieves about a 35\%
    reduction in MSE compared to Laplacian smoothing (MSE being
    measured to the original image).  Both methods were tuned
    optimally.}
  \label{fig:motivation}
  \vspace{-8pt}	
\end{figure}

In the 1d setting, classical theory from nonparametric statistics
draws a clear distinction between the performance of TV denoising and
estimators like Laplacian smoothing and Laplacian eigenmaps.
Perhaps surprisingly, this theory has not yet been fully developed in
dimensions $d \geq 2$.  Arguably, the comparison between TV
denoising and Laplacian smoothing and Laplacian eigenmaps is even
more interesting in higher dimensions, because the computational gap
between the methods is even larger (the former method being much more
expensive, say in 2d and 3d, than the latter two).  Shortly, we review
the 1d theory, and what is known in $d$-dimensions, for $d \geq 2$.
First, we introduce notation.

\paragraph{Notation.}

For deterministic (nonrandom) sequences $a_n,b_n$ we write $a_n =
O(b_n)$ to denote that $a_n/b_n$ is upper bounded for all $n$ large
enough, and $a_n \asymp b_n$ to denote that both $a_n=O(b_n)$ and
$a^{-1}_n=O(b_n^{-1})$.   Also, for random sequences $A_n,B_n$, we
write $A_n = O_\P(B_n)$ to denote that $A_n/B_n$ is bounded in
probability.  We abbreviate $a \wedge b = \min\{a,b\}$ and
$a \vee b = \max\{a,b\}$.  For an estimator \smash{$\htheta$} of the
parameter $\theta_0$ in \eqref{eq:model}, we define its mean squared
error (MSE) to be
\begin{equation*}
\MSE(\htheta,\theta_0) = \frac{1}{n} \|\htheta-\theta_0\|_2^2.
\end{equation*}
The risk of \smash{$\htheta$} is the expectation of its MSE, and for
a set $\cK \subseteq \R^n$, we define the minimax risk
and minimax linear risk to be
\begin{equation*}
R(\cK) =
\inf_{\htheta} \; \sup_{\theta_0 \in \cK} \;
\E \big[\MSE(\htheta,\theta_0)\big]
\quad \text{and} \quad
R_L(\cK) =
\inf_{\htheta \, \text{linear}} \; \sup_{\theta_0 \in \cK} \;
\E \big[\MSE(\htheta,\theta_0)\big],
\end{equation*}
respectively,
where the infimum on in the first expression is over all estimators
\smash{$\htheta$}, and in the second expression over all
{\it linear estimators} \smash{$\htheta$}, meaning that
\smash{$\htheta = Sy$} for a matrix $S \in \R^{n\times n}$.  We will
also refer to linear estimators as {\it linear smoothers}. Note that
both Laplacian smoothing in \eqref{eq:ls} and Laplacian eigenmaps in
\eqref{eq:le} are linear smoothers, but TV denoising in \eqref{eq:tv} is
not. Lastly, in
somewhat of an abuse of nomenclature, we will often call the parameter
$\theta_0$ in \eqref{eq:model} a function, and a set of possible
values for $\theta_0$ as in \eqref{eq:tv_class} a function space; this
comes from thinking of the components of $\theta_0$ as the
evaluations of an underlying function over $n$ locations on
the grid. This embedding has no formal importance, but it is
convenient notationally, and matches the notation in nonparametric
statistics.

\paragraph{Review: TV denoising in 1d.}

The classical nonparametric statistics literature
\citep{donoho1990minimax,minimaxwave,locadapt} provides a more or less
complete story for estimation under total variation constraints in 1d.
See also \cite{trendfilter} for a translation of these results to a
setting more consistent (notationally) to that in the current
paper. Assume that $d=1$ and $\BR_n = C > 0$, a constant (not
growing with $n$).  The results in \cite{minimaxwave} imply that
\begin{equation}
\label{eq:minimax_1d}
R (\cT_1(C)) \asymp n^{-2/3}.
\end{equation}
Further, \cite{locadapt} showed that the TV denoiser
\smash{$\htheta^{\mathrm{TV}}$} in \eqref{eq:tv},
with \smash{$\lambda \asymp n^{1/3}$}, satisfies
\begin{equation}
\label{eq:tv_1d}
\MSE(\htheta^{\mathrm{TV}},\theta_0) = O_\P(n^{-2/3}),
\end{equation}
for all \smash{$\theta_0 \in \cT_1(C)$}, and is thus minimax rate
optimal over \smash{$\cT_1(C)$}. (In assessing rates here and
throughout, we do not distinguish between convergence in expectation
versus convergence in probability.)
Wavelet denoising, under various choices of wavelet
bases, also achieves the minimax rate. However, many simpler
estimators do not.  To be more precise, it is shown in
\cite{minimaxwave} that
\begin{equation}
\label{eq:minimax_1d_linear}
R_L (\cT_1(C)) \asymp n^{-1/2}.
\end{equation}
Therefore, a substantial number of commonly used nonparametric
estimators---such as running mean estimators, smoothing splines,
kernel smoothing, Laplacian smoothing, and Laplacian eigenmaps, which
are all linear smoothers---have a major deficiency when it comes to
estimating functions of bounded variation.  Roughly speaking, they
will require many more samples to estimate $\theta_0$ within the same
degree of accuracy as an optimal method like TV or wavelet denoising
(on the order of \smash{$\epsilon^{-1/2}$} times more samples to
achieve an MSE of $\epsilon$). Further theory and empirical examples
(e.g., \cite{donoho1994ideal,minimaxwave,trendfilter}) offer the
following perspective: linear smoothers cannot cope with functions in
$T(C)$ that have spatially inhomogeneous smoothness, i.e., that vary
smoothly at some locations and vary wildly at others.
Linear smoothers can only produce estimates that are smooth
throughout, or wiggly throughout, but not a mix of the two.  They can
hence perform well over smaller, more homogeneous function classes
like Sobolev or Holder classes, but not larger ones like total
variation classes (or more generally, Besov and Triebel classes), and
for these, one must use more sophisticated, nonlinear techniques. A
motivating question: does such a gap persist in higher
dimensions, between optimal nonlinear and linear estimators, and if
so, how big is it?

\paragraph{Review: TV denoising in multiple dimensions.}

Recently, \cite{graphtf} established rates for TV denoising over
various graph models, including grids, and \cite{hutter2016optimal}
made improvements, particularly in the case of $d$-dimensional
grids with $d \geq 2$.  We can combine Propositions 4 and 6 of
\cite{hutter2016optimal} with Theorem 3 of \cite{graphtf} to give the
following result: if $d \geq 2$, and $\BR_n$ is an arbitrary
sequence (potentially unbounded with $n$), then the TV denoiser
\smash{$\htheta^{\mathrm{TV}}$} in \eqref{eq:tv} satisfies, over all
\smash{$\theta_0 \in \cT_d(\BR_n)$},
\begin{equation}
\label{eq:tv_hd}
\MSE(\htheta^{\mathrm{TV}},\theta_0) =
O_\P\bigg( \frac{\BR_n \log{n}}{n} \bigg)
\;\, \text{for $d=2$}, \;\, \text{and} \;\,
\MSE(\htheta^{\mathrm{TV}},\theta_0) =
O_\P\bigg( \frac{\BR_n \sqrt{\log{n}}}{n} \bigg) \;\,
\text{for $d \geq 3$},
\end{equation}
with \smash{$\lambda \asymp \log{n}$} for $d=2$,
and \smash{$\lambda \asymp \sqrt{\log{n}}$} for $d \geq 3$.
Note that, at first glance, this is a very different result from the
1d case.  We expand on this next.

\section{Summary of results}

\paragraph{A gap in multiple dimensions.}

For estimation of $\theta_0$ in \eqref{eq:model} when $d \geq 2$,
consider, e.g., the simplest possible linear smoother: the
mean estimator, \smash{$\htheta^{\mathrm{mean}}=\bar{y} \1$} (where
$\1=(1,\ldots,1) \in \R^n$, the vector of all 1s).  Lemma
\ref{lem:mean}, given below, implies that over \smash{$\theta_0 \in
  \cT_d(\BR_n)$}, the MSE of the mean estimator is bounded in
probability by \smash{$\BR_n^2\log{n}/n$} for $d=2$, and
\smash{$\BR_n^2/n$} for $d \geq 3$.  Compare this to
\eqref{eq:tv_hd}. When
$\BR_n=C>0$ is a constant, i.e., when the TV of $\theta_0$ is assumed to
be bounded (which is assumed for the 1d results in
\eqref{eq:minimax_1d}, \eqref{eq:tv_1d},
\eqref{eq:minimax_1d_linear}), this means that the TV denoiser and the
mean estimator converge to $\theta_0$ {\it at the same rate},
basically (ignoring log terms), the
``parametric rate'' of $1/n$, for estimating a finite-dimensional
parameter!  That TV denoising and such a trivial
linear smoother perform comparably over 2d and 3d grids could not be
farther from the story in 1d, where TV denoising is separated by an
unbridgeable gap from {\it all} linear smoothers, as shown in
\eqref{eq:minimax_1d}, \eqref{eq:tv_1d}, \eqref{eq:minimax_1d_linear}.

Our results in Section \ref{sec:tv_class} clarify this conundrum,
and can be summarized by three points.

\begin{itemize}
\item We argue in Section \ref{sec:scaling} that there is
  a proper ``canonical'' scaling for the TV class defined in
  \eqref{eq:tv_class}.
  E.g., when $d=1$, this yields $\BR_n \asymp 1$, a constant,
  but when $d=2$, this yields \smash{$\BR_n \asymp \sqrt{n}$}, and
  $\BR_n$ also diverges with $n$ for all $d \geq 3$.
  Sticking with $d=2$ as an interesting example, we see that under
  such a scaling, the MSE rates achieved by TV denoising and the mean
  estimator respectively, are drastically different; ignoring log
  terms, these are
  \begin{equation}
    \label{eq:gap_2d}
    \frac{\BR_n}{n} \asymp \frac{1}{\sqrt{n}}
    \quad \text{and} \quad
    \frac{\BR_n^2}{n} \asymp 1,
  \end{equation}
  respectively.  Hence, TV denoising has an MSE rate of
  \smash{$1/\sqrt{n}$}, in a setting where the mean estimator has a
  {\it constant} rate, i.e., a setting where it is not even known to
  be consistent.
	
\item We show in Section \ref{sec:tv_minimax_linear} that our choice
  to study the mean estimator here is not somehow ``unlucky'' (it is
  not a particularly bad linear smoother, nor is the upper bound on
  its MSE loose):
  the minimax linear risk over \smash{$\cT_d(\BR_n)$} is on the order
  $\BR_n^2/n$, for all $d \geq 2$.  Thus, even the best linear
  smoothers have the same poor performance as the mean over
  \smash{$\cT_d(\BR_n)$}.

\item We show in Section \ref{sec:tv_minimax} that the TV estimator is
  (essentially) minimax optimal over \smash{$\cT_d(\BR_n)$}, as the
  minimax risk over this class scales as $\BR_n/n$ (ignoring log terms).
\end{itemize}

To summarize, these results reveal a significant gap
between linear smoothers and optimal estimators like TV denoising, for
estimation over \smash{$\cT_d(\BR_n)$} in $d$
dimensions, with $d \geq 2$, as long as $\BR_n$ scales appropriately.
Roughly speaking, the TV classes encompass a
challenging setting for estimation because they are very broad,
containing a wide array of
functions---both globally smooth functions, said to have
homogeneous smoothness, and functions with vastly different
levels of smoothness at different grid locations, said to
have heterogeneous smoothness. Linear smoothers cannot handle
heterogeneous smoothness, and only nonlinear methods can enjoy good
estimation properties over the entirety of \smash{$\cT_d(\BR_n)$}.
To reiterate, a telling example is $d=2$ with the canonical scaling
\smash{$\BR_n \asymp \sqrt{n}$}, where we see that TV denoising
achieves the optimal \smash{$1/\sqrt{n}$} rate (up to log factors),
meanwhile, the best linear smoothers have max risk that is constant
over \smash{$\cT_2(\sqrt{n})$}.  See Figure \ref{fig:onehot} for an
illustration.

\begin{figure}[htb]
  \centering
  \begin{tabular}{cc}
    Trivial scaling, $\BR_n \asymp 1$ & Canonical scaling,
    $\BR_n \asymp \sqrt{n}$ \smallskip \\
    \includegraphics[width=0.45\textwidth]{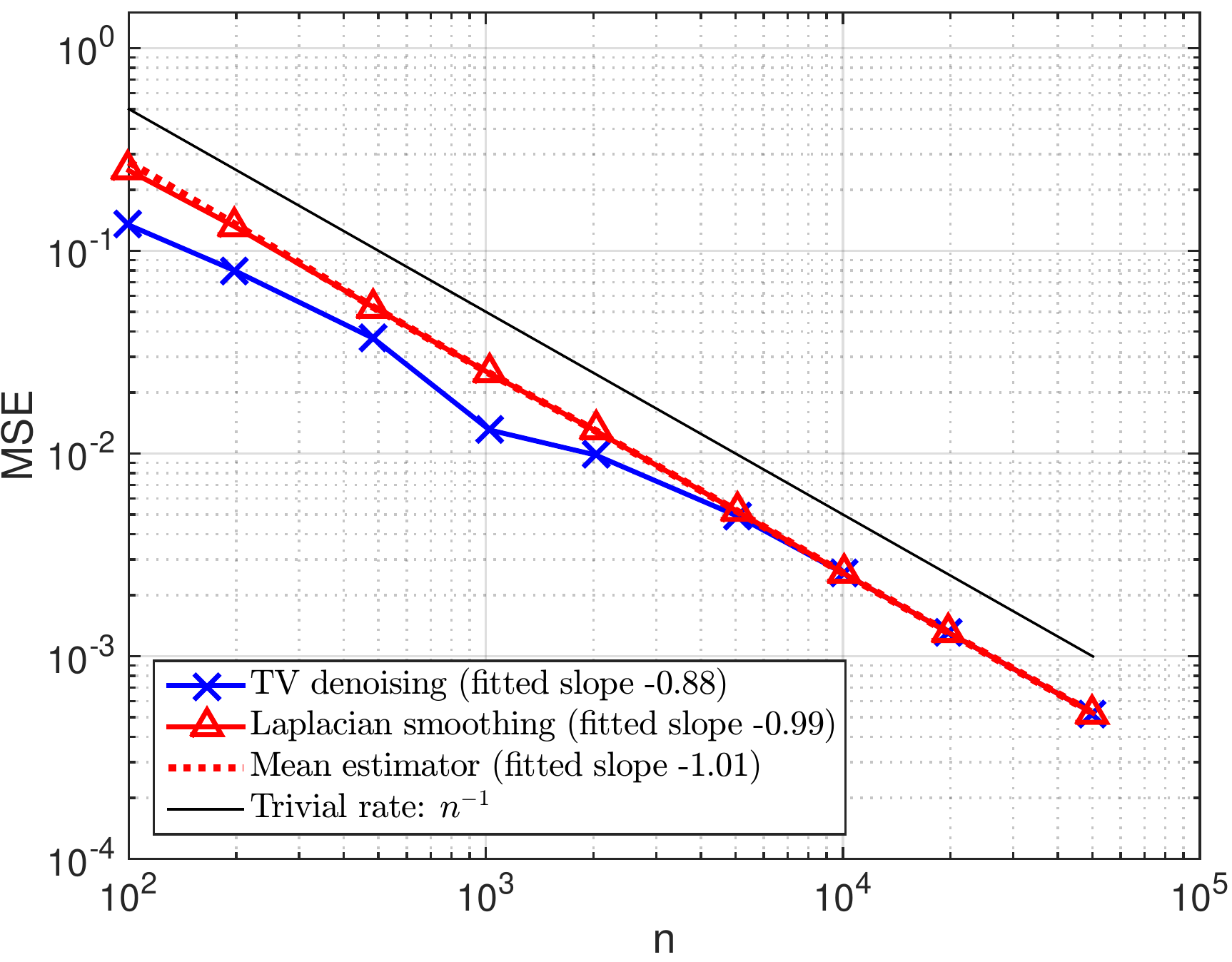} &
    \includegraphics[width=0.45\textwidth]{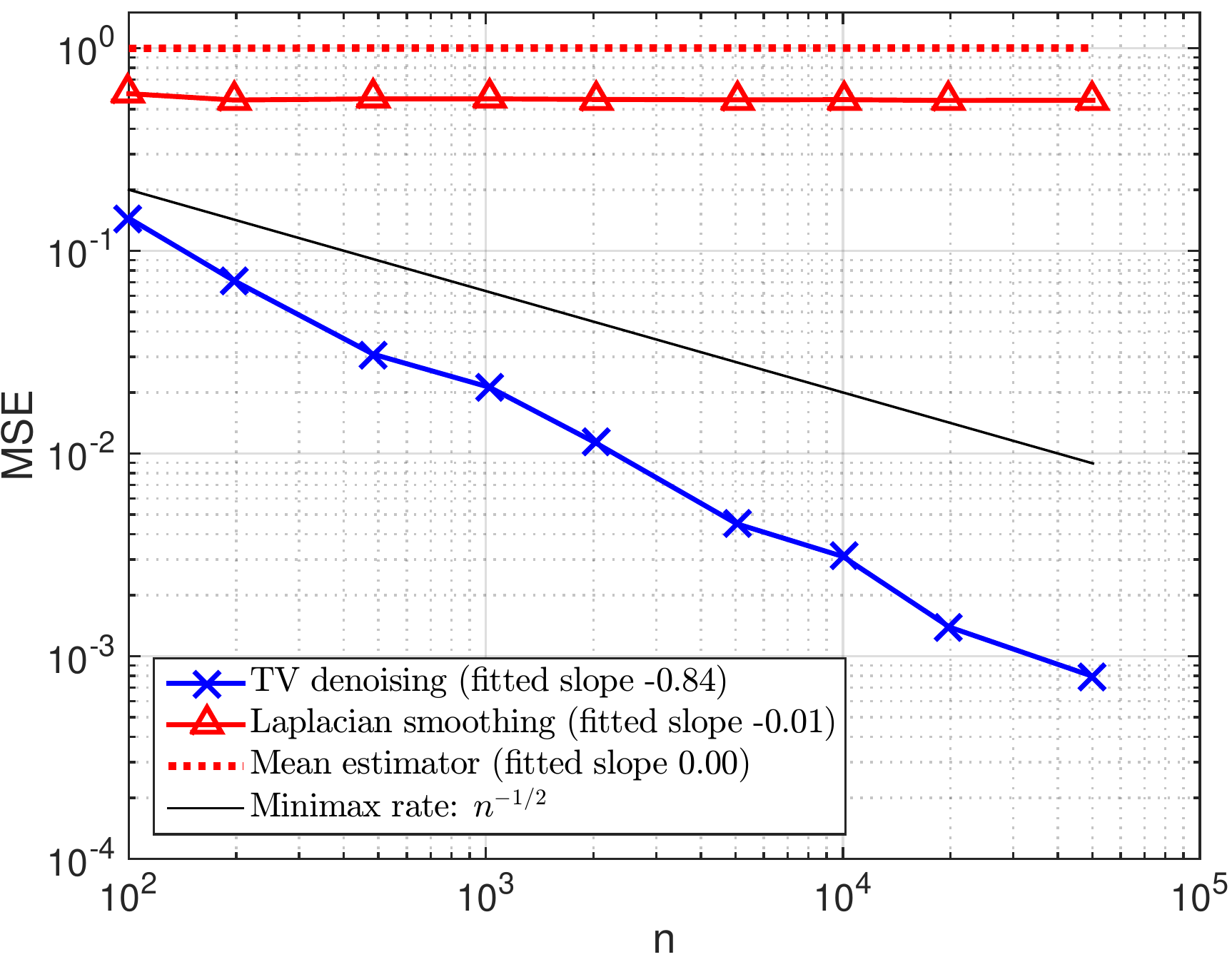}
  \end{tabular}
  \vspace{-5pt}
  \caption{\small\it MSE curves for estimation over a
    2d grid, under two very different scalings
    of $\BR_n$: constant and \smash{$\sqrt{n}$}.
    The parameter $\theta_0$ was a ``one-hot'' signal, with
    all but one component equal to 0. For each $n$, the results were
    averaged over 5 repetitions, and Laplacian smoothing and TV
    denoising were tuned for optimal average MSE.}
  \label{fig:onehot}
  \vspace{-8pt}
\end{figure}

\paragraph{Minimax rates over smaller function spaces, and
	adaptivity.}

Sections \ref{sec:sobolev_class} and \ref{sec:adaptivity} are focused
on different function spaces, discrete Sobolev spaces, which are
$\ell_2$ analogs of discrete TV spaces as we have defined them in
\eqref{eq:tv_class}.  Under the canonical scaling
of Section \ref{sec:scaling}, Sobolev spaces are
contained in TV spaces, and the former can be roughly thought of as
containing functions of more homogeneous smoothness.
The story now is more optimistic for linear smoothers, and the
following is a summary.

\begin{itemize}
\item In Section \ref{sec:sobolev_class}, we derive minimax rates
  for Sobolev spaces, and prove that
  linear smoothers---in particular, Laplacian smoothing and Laplacian
  eigenmaps---are optimal over these spaces.

\item In Section \ref{sec:adaptivity}, we discuss an interesting
  phenomenon, a phase transition of sorts, at $d=3$ dimensions.  When
  $d=1$ or $2$, the minimax rates for a TV space and its inscribed
  Sobolev space match; when $d \geq 3$, they do not, and the
  inscribed Sobolev space has a faster minimax rate.
  Aside from being an interesting statement about the TV and Sobolev
  function spaces in high dimensions, this raises an important
  question of adaptivity over the smaller Sobolev function spaces.
  As the minimax rates match for $d=1$ and $2$, any method
  optimal over TV spaces in these dimensions, such as TV denoising, is
  automatically optimal over the inscribed Sobolev spaces. But the
  question remains open for $d \geq 3$---does, e.g., TV denoising
  adapt to the faster minimax rate over Sobolev spaces?  We present
  empirical evidence to suggest that this may be true, and leave a
  formal study to future work.
\end{itemize}

\paragraph{Other considerations and extensions.}  There are many
problems related to the one that we study in this paper.  Clearly,
minimax rates for the TV and Sobolev classes over general graphs, not
just $d$-dimensional grids, are of interest. Our minimax lower bounds
for TV classes actually apply to generic graphs with bounded max
degree, though it is unclear whether to what extent they are sharp
beyond grids; a detailed study will be left to future work.
Another related topic is that of higher-order
smoothness classes, i.e., classes
contaning functions whose {\it derivatives} are (say) of bounded
variation.  The natural extension of TV denoising here is
called {\it trend filtering}, defined via the regularization of
discrete higher-order derivatives.  In the 1d setting, minimax rates,
the optimality of trend filtering, and the
suboptimality of linear smoothers is already
well-understood \citep{trendfilter}.  Trend filtering has been
defined and studied to some extent on general graphs \citep{graphtf},
but no notions of optimality have been investigated beyond 1d.  This
will also be left to future work.  Lastly, it is worth mentioning that
there are other estimators (i.e., other than the ones we study in
detail) that attain or nearly attain minimax rates over various
classes we consider in this paper. E.g., 
wavelet denoising is known to be optimal over TV classes in 1d
\citep{minimaxwave}; and comparing recent upper bounds
from \cite{needell2013stable,hutter2016optimal} with the lower bounds
in this work, we see that wavelet denoising is also nearly
minimax in 2d (ignoring log terms).

\section{Analysis over TV classes}
\label{sec:tv_class}


\subsection{Canonical scaling for TV and Sobolev classes}
\label{sec:scaling}

We start by establishing what we call a ``canonical'' scaling for the
radius $\BR_n$ of the TV ball \smash{$\cT_d(\BR_n)$} in
\eqref{eq:tv_class}, as well as the radius $\BR'_n$ of the Sobolev
ball \smash{$\cS_d(\BR'_n)$}, defined as
\begin{equation}
\label{eq:sobolev_class}
\cS_d(\BR_n') = \big\{ \theta : \|D\theta\|_2 \leq \BR_n' \big\}.
\end{equation}
Proper scalings for \smash{$\BR_n, \BR_n'$} will be critical for
properly interpreting our new results in $d$ dimensions, in a way that
is comparable to known results for $d=1$ (which are usually stated in
terms of the 1d scalings
\smash{$\BR_n \asymp 1$}, \smash{$\BR_n' \asymp 1/\sqrt{n}$}). To
study \eqref{eq:tv_class}, \eqref{eq:sobolev_class}, it helps to
introduce a third function space,
\begin{equation}
\label{eq:holder_class}
\cH_d(1) = \Big\{ \theta : \theta_i = f(i_1/\ell\ldots,i_d/\ell), \;
i=1,\ldots,n, \;
\text{for some $f \in \cH^{\mathrm{cont}}_d(1)$} \Big\}.
\end{equation}
Above, we have mapped each location $i$ on the grid to
a multi-index \smash{$(i_1,\ldots,i_d) \in \{1,\ldots,\ell\}^d$}, where
\smash{$\ell=n^{1/d}$}, and \smash{$\cH_d^{\mathrm{cont}}(1)$} denotes
the (usual) continuous Holder space on \smash{$[0,1]^d$}, i.e.,
functions that are 1-Lipschitz with respect to the $\ell_\infty$ norm.
We seek an embedding that is analogous
to the embedding of continuous Holder, Sobolev, and total variation
spaces in 1d functional analysis, namely,
\begin{equation}
\label{eq:embedding}
\cH_d(1) \subseteq \cS_d(\BR_n') \subseteq \cT_d(\BR_n).
\end{equation}
Our first lemma provides a choice of $\BR_n,\BR_n'$ that makes the
above true. Its proof, as with all proofs in this paper, can be found
in the appendix.

\begin{lemma}
\label{lem:scaling}
For $d \geq 1$, the embedding in \eqref{eq:embedding} holds with
choices \smash{$\BR_n \asymp n^{1-1/d}$} and
\smash{$\BR_n' \asymp n^{1/2-1/d}$}. Such choices are called the
{\em canonical scaling} for the function classes in
\eqref{eq:tv_class}, \eqref{eq:sobolev_class}.
\end{lemma}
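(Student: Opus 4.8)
The plan is to verify the two inclusions in \eqref{eq:embedding} separately, tracking the scaling of the constants with $n$ (equivalently with $\ell=n^{1/d}$) in each step. Throughout I will use the elementary inequality $\|x\|_1 \le \sqrt{m}\,\|x\|_2$ for $x\in\R^m$, applied to $x=D\theta$, so that $\|D\theta\|_1 \le \sqrt{m}\,\|D\theta\|_2$. For a $d$-dimensional grid with $n$ nodes we have $m \asymp n$ edges (each node has degree at most $2d$, a constant), so $\sqrt m \asymp \sqrt n$. Hence $\cS_d(\BR_n') \subseteq \cT_d(\BR_n)$ holds as soon as $\BR_n \asymp \sqrt{n}\,\BR_n'$, which is consistent with the claimed $\BR_n \asymp n^{1-1/d}$ and $\BR_n' \asymp n^{1/2-1/d}$. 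So the second inclusion reduces to a one-line norm comparison once the first is in place.

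For the first inclusion $\cH_d(1)\subseteq \cS_d(\BR_n')$, take $\theta$ with $\theta_i=f(i_1/\ell,\ldots,i_d/\ell)$ for some $f$ that is $1$-Lipschitz in the $\ell_\infty$ norm. For an edge $(i,j)\in E$, the two grid points differ in a single coordinate by one lattice step, so the corresponding arguments of $f$ differ by $1/\ell$ in one coordinate; Lipschitzness gives $|\theta_i-\theta_j|\le 1/\ell$. Therefore $\|D\theta\|_2^2 = \sum_{(i,j)\in E}(\theta_i-\theta_j)^2 \le m\cdot \ell^{-2} \asymp n \cdot n^{-2/d} = n^{1-2/d}$, i.e. $\|D\theta\|_2 \le \BR_n'$ with $\BR_n' \asymp n^{1/2-1/d}$, as claimed. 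Combining with the previous paragraph, $\BR_n \asymp \sqrt n \cdot n^{1/2-1/d} = n^{1-1/d}$.

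The only genuinely nontrivial point — and the step I would be most careful about — is checking that these scalings are tight, i.e. that $\BR_n'$ cannot be taken smaller (otherwise the lemma would be vacuous as a ``canonical'' choice), and more importantly that $\cH_d(1)$ is actually nonempty enough to force $\BR_n' \gtrsim n^{1/2-1/d}$ and $\BR_n \gtrsim n^{1-1/d}$. For this I would exhibit an explicit witness: take $f(x_1,\ldots,x_d)=x_1$, which is $1$-Lipschitz in $\ell_\infty$; then $\theta_i = i_1/\ell$, and the edge differences are exactly $1/\ell$ across each of the $\asymp n$ edges in the coordinate-$1$ direction (and $0$ otherwise), giving $\|D\theta\|_2^2 \asymp n\cdot\ell^{-2} \asymp n^{1-2/d}$ and $\|D\theta\|_1 \asymp n\cdot\ell^{-1} \asymp n^{1-1/d}$. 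So both the upper bounds from the inclusion argument and these lower bounds match, pinning down the scaling up to constants. The rest is bookkeeping on constants depending on $d$ (the degree bound $2d$, the exact edge count $m = d\,n^{1-1/d}(n^{1/d}-1) \asymp n$), none of which affects the stated $\asymp$ rates.
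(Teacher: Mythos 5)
Your proof is correct and follows essentially the same route as the paper's: the $1$-Lipschitz property gives $\|D\theta\|_2^2 \le m/\ell^2$ for the Holder-to-Sobolev inclusion, and the norm comparison $\|x\|_1 \le \sqrt{m}\,\|x\|_2$ handles the Sobolev-to-TV inclusion, with $m \asymp n$ yielding the stated rates. Your additional tightness check via the witness $f(x)=x_1$ is not required by the statement (which only asserts the embeddings hold for these choices) but is a sensible sanity check that the paper omits.
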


As a sanity check, both the (usual) continuous Holder and Sobolev
function spaces in $d$ dimensions are known to have minimax risks that
scale as \smash{$n^{-2/(2+d)}$}, in a standard nonparametric
regression setup (e.g., \cite{gyorfi2002book}).
Under the canonical scaling \smash{$\BR_n' \asymp n^{1/2-1/d}$}, our
results in Section \ref{sec:sobolev_class} show that the discrete
Sobolev class \smash{$\cS_d(n^{1/2-1/d})$} also admits a minimax rate
of \smash{$n^{-2/(2+d)}$}.


\subsection{Minimax rates over TV classes}
\label{sec:tv_minimax}

The following is a lower bound for the minimax risk of the TV class
\smash{$\cT_d(\BR_n)$} in \eqref{eq:tv_class}.


\begin{theorem}
\label{thm:tv_minimax}
Assume $n \geq 2$, and denote \smash{$d_{\max}=2d$}. Then, for
constants $c>0$, $\rho_1 \in (2.34, 2.35)$,
\begin{equation}
\label{eq:minimax_dd}
R(\cT_d(\BR_n)) \geq
c \cdot
\begin{cases}
  \displaystyle
  \frac{\sigma \BR_n \sqrt{1+\log(\sigma d_{\max} n/\BR_n)}}
  {d_{\max} n} &
  \text{if $\BR_n \in [\sigma d_{\max} \sqrt{\log n},
\sigma d_{\max} n / \sqrt{\rho_1}]$} \\
  \BR_n^2 / (d_{\max}^2 n) \vee \sigma^2/n &
  \text{if $\BR_n < \sigma d_{\max} \sqrt{\log n}$} \\
  \sigma^2 / \rho_1 &
  \text{if $\BR_n >  \sigma d_{\max} n/\sqrt{\rho_1}$}
\end{cases}.
\end{equation}
\end{theorem}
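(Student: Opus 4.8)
My plan is to reduce the minimax problem over the TV ball to a well-understood lower-bound computation for an $\ell_1$-ball in the transformed ("analysis") coordinates, and then handle the three regimes of $\BR_n$ separately. First I would observe that $\|D\theta\|_1 \le \BR_n$ defines a polytope; since the kernel of $D$ is spanned by $\1$ (for a connected grid), I would split $\theta = \bar\theta\1 + \theta^\perp$ with $\theta^\perp \perp \1$. The all-ones direction is a one-dimensional unconstrained subspace, which immediately contributes a $\sigma^2/n$ term to the minimax risk (estimating an unconstrained scalar mean from $n$ i.i.d.\ samples); this gives the $\sigma^2/n$ piece in the middle case. For the orthogonal part, I would pass to coordinates via the (pseudo)inverse of $D$ restricted to $\col(D^T)$, so that the constraint becomes an $\ell_1$-type constraint on the "wavelet-like" coefficients, with the noise pushed through $D^+$. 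The key geometric input is the behavior of the singular values of $D$ on a $d$-dimensional grid: the smallest nonzero singular value is of order $n^{-1/d}$ (Poincaré/spectral-gap for the grid Laplacian), and more refined counts of how many eigenvalues of $L$ lie below a threshold control the effective dimension at each scale.

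The main technical engine I would invoke is the standard lower bound for denoising over an $\ell_1$-ball (à la Donoho--Johnstone / the hypercube and Assouad-style arguments): if one observes $z \sim N(\mu, \tau^2 I_p)$ with $\|\mu\|_1 \le B$, then the minimax risk is of order $\tau^2 \cdot \frac{B}{\tau\sqrt p}\sqrt{1+\log(\tau \sqrt p / B)}$ in the sparse regime $B \le \tau\sqrt{p}\sqrt{\log p}$, transitioning to $B^2/p$ for very small $B$ and saturating at $\tau^2$ for large $B$. The three cases in \eqref{eq:minimax_dd} are exactly the images of these three regimes under the change of variables. Concretely, I would (i) embed a suitable sub-block of "one-hot"-type signals (single-edge bumps, or single-eigenvector bumps at the coarsest scale) into $\cT_d(\BR_n)$, respecting the budget $\BR_n$ and the noise scaling; (ii) apply Fano's inequality or the Assouad/hypercube lemma over a packing of this sub-block to get the logarithmic-in-$n$ factor and the linear-in-$\BR_n$ numerator; (iii) in the small-$\BR_n$ regime, use a simple two-point or shrunken-hypercube argument to get the $\BR_n^2/(d_{\max}^2 n)$ bound; and (iv) in the large-$\BR_n$ regime, note that $\cT_d(\BR_n)$ then contains a ball (in the $\theta^\perp$ subspace) of radius comparable to $\BR_n \cdot \sigma\sqrt{\rho_1}/(\sigma d_{\max} n) \gtrsim 1/\sqrt{\rho_1}$ around $0$ in the direction of the Fiedler vector, so the risk is bounded below by that of estimating an unconstrained one-dimensional Gaussian location, namely $\sigma^2/\rho_1$. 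The factor $d_{\max}=2d$ enters because each edge-difference $|\theta_i-\theta_j|$ contributes to the TV of at most $d_{\max}$ signals of a disjoint-bump construction, so a bump of a given $\ell_\infty$-height costs a factor $d_{\max}$ in TV budget; the specific constant $\rho_1 \in (2.34,2.35)$ looks like the value of $4\sin^2(\pi/\text{something})$-type expression — presumably the second-smallest Laplacian eigenvalue of a path (i.e., $2-2\cos(\pi/n) \to 0$ is too small, so more likely it comes from a fixed small sub-grid used in the construction) — and I would pin it down from whatever explicit sub-grid the disjoint-bump packing uses.

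The step I expect to be the main obstacle is (ii): getting the logarithmic factor $\sqrt{1+\log(\sigma d_{\max} n/\BR_n)}$ with the right constant and over the full stated range of $\BR_n$, uniformly. This requires a careful choice of the packing set — the number of active bumps $s$, their common height $h$, and the grid sub-region they occupy must be tuned so that $sh \asymp \BR_n/d_{\max}$ (the TV budget) while simultaneously $h \asymp \sigma\sqrt{\log(p/s)}$ (the per-coordinate detectability threshold) and $s \le p = \Theta(n)$. Balancing these while keeping the separation in MSE (which is $\asymp sh^2/n$) large compared to the Fano/Assouad slack is the delicate part, and it is exactly where the phase boundaries $\BR_n \asymp \sigma d_{\max}\sqrt{\log n}$ and $\BR_n \asymp \sigma d_{\max} n/\sqrt{\rho_1}$ come from. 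A secondary subtlety is ensuring the bumps are genuinely disjoint-support on the grid and that each has TV exactly (a constant times) $d_{\max} h$ rather than something dimension-dependent in a way that breaks the clean $d_{\max}$ scaling — here the product structure of the grid, and the fact that a single nonzero coordinate has $2d$ incident edges, does the work, but it must be checked that overlapping incident edges between neighboring bumps are avoided by leaving a one-node margin between bumps.
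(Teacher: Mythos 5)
Your core reduction is the same as the paper's: a one-hot bump of height $h$ at a node of degree at most $d_{\max}=2d$ has total variation at most $d_{\max}h$, so sparse signals with $\ell_1$ norm at most $\BR_n/d_{\max}$ sit inside $\cT_d(\BR_n)$. In fact the paper's Lemma \ref{lem:subset} shows $\|D\theta\|_1\le d_{\max}\|\theta\|_1$ for \emph{every} $\theta$, so the whole ball $B_1(\BR_n/d_{\max})$ embeds and your worry about leaving one-node margins between bumps is unnecessary. From there the paper does not run Assouad/Fano itself: it cites Proposition 5 of \citet{birge2001gaussian}, which gives the sharp minimax lower bound over $\ell_p$ balls for \emph{all} radii (unlike the Donoho--Johnstone result), and the constant $\rho_1\in(2.34,2.35)$ is the unique solution of $\rho\log\rho=2$ appearing in that proposition---it has nothing to do with Laplacian eigenvalues or a sub-grid, so your plan to pin it down from the packing geometry would not converge. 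The $\sigma^2/n$ term comes, as you say, from the unconstrained constant direction.

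Two steps in your plan would fail as written. First, the detour through $D^+$ coordinates: setting $z=D\theta$ does turn the constraint into an $\ell_1$ ball, but the noise is no longer isotropic and the loss $\|\hat\theta-\theta\|_2^2=\|D^+(\hat z - z)\|_2^2$ becomes a weighted loss with weights spread over the full spectrum of $L$ (from order $n^{-2/d}$ up to a constant), so the ``standard $\ell_1$-ball result'' you invoke for $z\sim N(\mu,\tau^2 I)$ does not apply in those coordinates; you must work in the original $\theta$ coordinates, which is what your bump construction (and the paper) actually does. Second, your argument for the large-$\BR_n$ case is wrong: a single unconstrained direction (Fiedler or otherwise) contributes only $\sigma^2/n$ to the normalized MSE, not the constant $\sigma^2/\rho_1$. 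To get a constant lower bound one needs $\Omega(n)$ essentially unconstrained coordinates, e.g.\ $B_1(\BR_n/d_{\max})\supseteq B_\infty(\BR_n/(d_{\max}n))$ with per-coordinate half-width exceeding $\sigma/\sqrt{\rho_1}$ in this regime, whence the hypercube's minimax risk is $n$ copies of the bounded-interval scalar problem. Finally, you correctly identify step (ii)---the logarithmic factor, uniformly over the stated range of $\BR_n$---as the hard part, but your proposal leaves it as a to-do; either carry out the sparse-hypercube calculation in full or, as the paper does, cite the Birg\'e--Massart proposition that packages it.
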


The proof uses a simplifying reduction of the TV class, via
\smash{$\cT_d(\BR_n) \supseteq
B_1(\BR_n/d_{\max})$}, the latter set denoting the $\ell_1$ ball
of radius \smash{$\BR_n/d_{\max}$} in $\R^n$. It then invokes a sharp
characterization of the minimax risk in normal means problems over
$\ell_p$ balls due to \cite{birge2001gaussian}.  Several remarks are
in order.

\begin{remark}
The first line on the right-hand side in \eqref{eq:minimax_dd} often
provides the most useful lower bound.  To see this, recall that under
the canonical scaling for TV classes, we have
\smash{$\BR_n=n^{1-1/d}$}.  For all $d \geq 2$, this certainly implies
\smash{$\BR_n \in [\sigma d_{\max} \sqrt{\log n},
\sigma d_{\max} n / \sqrt{\rho_1}]$}, for large $n$.
\end{remark}

\begin{remark}
Even though its construction is very simple, the lower bound
on the minimax risk in \eqref{eq:minimax_dd} is sharp or nearly sharp
in many interesting cases.  Assume that
\smash{$\BR_n \in [\sigma d_{\max} \sqrt{\log n},
\sigma d_{\max} n / \sqrt{\rho_1}]$}.  The lower bound rate is
\smash{$\BR_n \sqrt{\log(n/\BR_n)}/n$}.
When $d=2$, we see that this
is very close to the upper bound rate of \smash{$\BR_n \log{n}/n$}
achieved by the TV denoiser, as stated in \eqref{eq:tv_hd}.
These two differ by at most a \smash{$\log{n}$} factor (achieved when
$\BR_n \asymp n$).  When $d \geq 3$, we see that the lower bound rate
is even closer to the upper bound rate of
\smash{$\BR_n \sqrt{\log{n}}/n$}
achieved by the TV denoiser, as in \eqref{eq:tv_hd}.  These two now
differ by at most a \smash{$\sqrt{\log{n}}$} factor (again achieved
when $\BR_n \asymp n$).  We hence conclude that the TV denoiser is
essentially minimax optimal in all dimensions $d \geq 2$.
\end{remark}

\begin{remark}
When $d=1$, and (say) \smash{$\BR_n \asymp 1$}, the lower bound rate of
\smash{$\sqrt{\log{n}}/n$} given by Theorem \ref{thm:tv_minimax} is not
sharp; we know from \cite{minimaxwave} (recall \eqref{eq:minimax_1d})
that the minimax rate over \smash{$\cT_1(1)$} is \smash{$n^{-2/3}$}.
The result in the theorem (and also Theorem
\ref{thm:tv_minimax_linear}) in fact holds more generally, beyond
grids: for an arbitrary graph $G$, its edge incidence matrix $D$,
and \smash{$\cT_d(\BR_n)$} as defined in \eqref{eq:tv_class}, the
result holds for \smash{$d_{\max}$} equal to the max degree of $G$. It
is unclear to what extent this is sharp, for different graph models.
\end{remark}

\subsection{Minimax linear rates over TV classes}
\label{sec:tv_minimax_linear}

We now turn to a lower bound on the minimax linear risk of the TV
class \smash{$\cT_d(\BR_n)$} in \eqref{eq:tv_class}.

\begin{theorem}
\label{thm:tv_minimax_linear}
Recall the notation \smash{$d_{\max}=2d$}. Then
\begin{equation}
\label{eq:minimax_dd_linear}
  R_L(\cT_d(\BR_n)) \geq
  \frac{\sigma^2 \BR_n^2}{\BR_n^2 + \sigma^2 d_{\max}^2 n}
  \vee \frac{\sigma^2}{n} \geq
\frac{1}{2} \Bigg( \frac{\BR_n^2}{d_{\max}^2n} \wedge \sigma^2 \Bigg)
\vee \frac{\sigma^2}{n}.
\end{equation}
\end{theorem}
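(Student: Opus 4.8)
The plan is to lower-bound the minimax linear risk over $\cT_d(\BR_n)$ by exhibiting a small, convenient subset of the class over which the linear minimax risk is already large, and then appealing to the monotonicity $R_L(\cK') \le R_L(\cK)$ whenever $\cK' \subseteq \cK$. As in the proof of Theorem \ref{thm:tv_minimax}, the natural subset is a scaled $\ell_1$ ball: since $\|D\theta\|_1 \le d_{\max}\|\theta\|_1$ (each coordinate of $\theta$ appears in at most $d_{\max}$ rows of $D$), we have $B_1(\BR_n/d_{\max}) \subseteq \cT_d(\BR_n)$, and hence $R_L(\cT_d(\BR_n)) \ge R_L(B_1(\BR_n/d_{\max}))$. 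Actually it is even cleaner to drop further to a hyperrectangle or to a single coordinate axis: the set $\{t e_1 : |t| \le \BR_n/d_{\max}\}$ is contained in $\cT_d(\BR_n)$, and one can also throw in the all-equal directions (which cost zero TV). So the first step is to reduce to computing, or lower-bounding, the linear minimax risk over a simple set like a segment $\{\theta : |\theta_1| \le \tau\}$ with $\tau = \BR_n/d_{\max}$, together with the unconstrained line $\{c\1 : c \in \R\}$.

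The second step is the exact linear-minimax calculation for these toy sets. For a one-dimensional bounded parameter $\theta_1 \in [-\tau,\tau]$ observed as $y_1 = \theta_1 + N(0,\sigma^2)$ (with the other $n-1$ coordinates pure noise that a good linear estimator will shrink to zero, contributing nothing), a linear estimator is $\htheta_1 = s y_1$, whose worst-case risk over $|\theta_1|\le\tau$ is $\tfrac1n\big(s^2\sigma^2 + (1-s)^2\tau^2\big)$; minimizing over $s$ gives the classical Bayes-linear value $\tfrac1n \cdot \tfrac{\sigma^2\tau^2}{\sigma^2+\tau^2}$. Plugging $\tau = \BR_n/d_{\max}$ and being slightly careful about the normalization by $n$ (the risk is MSE $=\tfrac1n\|\cdot\|_2^2$) yields a term of order $\tfrac{\sigma^2\BR_n^2}{n(\sigma^2 d_{\max}^2 + \BR_n^2)}$. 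To match the stated bound $\frac{\sigma^2\BR_n^2}{\BR_n^2 + \sigma^2 d_{\max}^2 n}$ — note the $n$ multiplies $\sigma^2 d_{\max}^2$ inside, not the whole denominator — one instead uses a spread-out sub-configuration: put mass on many coordinates at once, e.g. the set of $\theta$ supported on all $n$ coordinates with $\|\theta\|_\infty \le \BR_n/(d_{\max} n)$ (which has TV at most $d_{\max} \cdot n \cdot \BR_n/(d_{\max}n) = \BR_n$, crudely), a product of $n$ identical segments; the linear minimax risk of a product box is the sum of the per-coordinate Bayes-linear risks, giving $\tfrac1n \cdot n \cdot \tfrac{\sigma^2 \tau^2}{\sigma^2+\tau^2}$ with $\tau = \BR_n/(d_{\max}n)$, i.e. $\tfrac{\sigma^2(\BR_n/(d_{\max}n))^2}{\sigma^2 + (\BR_n/(d_{\max}n))^2} = \tfrac{\sigma^2\BR_n^2}{\sigma^2 d_{\max}^2 n^2 + \BR_n^2}$; rescaling the box edges to exactly saturate the TV bound improves the constant to the claimed form. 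The $\sigma^2/n$ term in the lower bound is just the cost of estimating the single free parameter $c$ in the $c\1$ direction (one unconstrained Gaussian coordinate, risk $\sigma^2/n$), and it is combined via $\vee$ because the hardest direction dominates.

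The third step is purely algebraic: from $\frac{\sigma^2 \BR_n^2}{\BR_n^2 + \sigma^2 d_{\max}^2 n}$ deduce the cleaner two-sided form $\tfrac12\big(\tfrac{\BR_n^2}{d_{\max}^2 n} \wedge \sigma^2\big)$. This follows from the elementary inequality $\tfrac{ab}{a+b} \ge \tfrac12 (a \wedge b)$ for $a,b>0$, applied with $a = \sigma^2$ and $b = \BR_n^2/(d_{\max}^2 n)$: indeed $\tfrac{ab}{a+b} = \tfrac{1}{1/a + 1/b} \ge \tfrac{1}{2/(a\wedge b)} = \tfrac12(a\wedge b)$. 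Then take $\vee \, \sigma^2/n$ on both sides.

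The main obstacle is getting the constants and the precise denominator $\BR_n^2 + \sigma^2 d_{\max}^2 n$ right, which hinges on choosing the sub-configuration of $\cT_d(\BR_n)$ that is simultaneously (i) contained in the TV ball with the bound $\|D\theta\|_1 \le \BR_n$ genuinely tight up to the $d_{\max}$ factor, (ii) a product set so its linear minimax risk factorizes exactly, and (iii) scaled so the per-coordinate segment half-width is exactly $\BR_n/(d_{\max} n)$ (or a constant multiple thereof). I would handle (i) by taking a uniform-valued perturbation — $\theta = c\1 + t v$ for a fixed unit-ish pattern $v$ — so that the constant part is free and only the $v$ part incurs TV cost; this also neatly produces the $\vee\,\sigma^2/n$ term from the $c$ direction. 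The one genuinely delicate point is verifying that a linear estimator cannot exploit the structure across coordinates to beat the product lower bound — but that is exactly the content of the fact that the linear minimax risk of a product of intervals equals the sum of the one-dimensional linear minimax risks, which holds because the least-favorable prior is a product of the (scaled) two-point priors and linear estimators decouple over coordinates for such priors.
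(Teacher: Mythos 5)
Your step 1 (reduce to $B_1(\BR_n/d_{\max}) \subseteq \cT_d(\BR_n)$ via $\|D\theta\|_1 \le d_{\max}\|\theta\|_1$) and your step 3 (the algebra $ab/(a+b)\ge\tfrac12(a\wedge b)$, then $\vee\,\sigma^2/n$) match the paper exactly. The gap is in step 2, and it is not a constant-chasing issue: no product box that is actually \emph{contained} in $\cT_d(\BR_n)$ can certify the claimed bound. A rectangle $\prod_i[-\tau_i,\tau_i]$ lies in the TV ball only if every sign pattern of its corners does, which forces $\sum_i\tau_i\lesssim \BR_n/d_{\max}$; optimizing the decoupled risk $\frac1n\sum_i \sigma^2\tau_i^2/(\sigma^2+\tau_i^2)$ under this constraint (the optimum puts roughly $k\approx \BR_n/(\sigma d_{\max})$ coordinates at half-width $\sigma$) yields at most order $\sigma\BR_n/(d_{\max}n)$ --- the \emph{nonlinear} minimax rate. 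Under the canonical scaling $\BR_n\asymp n^{1-1/d}$ with $d\ge 2$ this is $n^{-1/d}$, whereas the theorem asserts a constant lower bound; your specific box of half-width $\BR_n/(d_{\max}n)$ gives only $\sigma^2\BR_n^2/(\BR_n^2+\sigma^2 d_{\max}^2 n^2)\asymp n^{-2/d}$, and no ``rescaling to saturate the TV bound'' can close a polynomial gap. The rectangle that actually produces the stated quantity is the cube of half-width $\BR_n/(d_{\max}\sqrt n)$, which is \emph{not} contained in $B_1(\BR_n/d_{\max})$ (its $\ell_1$ radius is larger by a factor of $\sqrt n$), so subset monotonicity alone cannot reach it.

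The missing ingredient is precisely the nontrivial content of the theorem: linear estimators cannot exploit the sparsity structure of the $\ell_1$ ball, so $R_L(B_1(r)) = R_L(\mathrm{qconv}(B_1(r))) = R_L(B_2(r))$ (Theorem 11 of Donoho, Liu, and MacGibbon), and only \emph{after} this quadratic convexification does one pass to the hardest rectangular subproblem, now taken inside $B_2(r)$, namely the cube of half-width $r/\sqrt n$. That is the paper's main proof. Its alternative first-principles proof achieves the same effect differently: writing $\htheta=Sy$, it lower bounds the worst-case bias by the bias at the spikes $(\BR_n/d_{\max})e_i$, uses $\max_i\|(I-S)e_i\|_2^2\ge \frac1n\|I-S\|_F^2$, and then minimizes $\sigma^2 S_{ii}^2+\frac{\BR_n^2}{d_{\max}^2 n}(1-S_{ii})^2$ coordinatewise; the max-to-average step is what buys the extra factor of $n$ that your product construction cannot supply. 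Your decoupling claim for hyperrectangles is fine as far as it goes --- you are simply not permitted to use the rectangle you need. To repair the argument, either import the quadratic convexification result or reproduce the Frobenius-norm computation.
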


The proof relies on an elegant meta-theorem on minimax rates
from \cite{donoho1990minimax}, which uses the concept of
a ``quadratically convex'' set, whose minimax linear risk is the same
as that of its hardest rectangular subproblem.  An alternative proof
can be given entirely from first principles.  See the appendix.

\begin{remark}
\label{rem:tv_minimax_linear}
When \smash{$\BR_n^2$} grows with $n$, but not too fast (scales as
\smash{$\sqrt{n}$}, at most), the lower bound rate in
\eqref{eq:minimax_dd_linear} will be
\smash{$\BR_n^2/n$}.  Compared to the \smash{$\BR_n/n$} minimax rate
from Theorem \ref{thm:tv_minimax} (ignoring log terms), we see a clear
gap between optimal nonlinear and linear estimators.  In fact, under
the canonical scaling \smash{$\BR_n \asymp n^{1-1/d}$}, for
any $d \geq 2$, this gap is seemingly huge: the lower bound for the
minimax linear rate will be a constant, whereas the minimax rate from
Theorem \ref{thm:tv_minimax} (ignoring log terms) will be
\smash{$n^{-1/d}$}.
\end{remark}

We now show that the lower bound in Theorem
\ref{thm:tv_minimax_linear} is essentially tight, and remarkably, it
is certified by analyzing two trivial linear estimators: the mean
estimator and the identity estimator.

\begin{lemma}
\label{lem:mean}
Let $M_n$ denote the largest column norm of \smash{$D^\dagger$}.
For the mean estimator
\smash{$\htheta^{\mathrm{mean}} = \bar{y} \1$},
\begin{equation*}
\sup_{\theta_0 \in \cT_d(\BR_n)}
\E \big[\MSE(\htheta^{\mathrm{mean}}, \theta_0) \big]
\leq \frac{\sigma^2 + \BR_n^2 M_n^2}{n},
\end{equation*}
From Proposition 4 in \cite{hutter2016optimal}, we have
\smash{$M_n=O(\sqrt{\log{n}})$}
when $d=2$ and $M_n=O(1)$ when $d \geq 3$.
\end{lemma}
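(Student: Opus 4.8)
The plan is to decompose the risk of the mean estimator into a variance term and a worst-case bias term, and then to control the bias by exploiting the structure of $D$ and its pseudoinverse. First I would note that $\bar y \sim N(\bar\theta_0, \sigma^2/n)$ with $\bar\theta_0 = \tfrac1n \1^T\theta_0$, so that a standard bias--variance calculation gives
\[
\E\big[\MSE(\htheta^{\mathrm{mean}},\theta_0)\big]
= \frac1n \sum_{i=1}^n \E\big(\bar y - \theta_{0,i}\big)^2
= \frac1n\Big(\sigma^2 + \|\theta_0 - \bar\theta_0\1\|_2^2\Big).
\]
This reduces everything to bounding $\|\theta_0 - \bar\theta_0\1\|_2^2$, the squared distance from $\theta_0$ to the constant signal, uniformly over $\theta_0\in\cT_d(\BR_n)$.

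Next I would identify $\theta_0 - \bar\theta_0\1$ as the projection of $\theta_0$ onto the orthogonal complement of $\1$. Since the grid $G$ is connected, $\nul(D) = \mathrm{span}(\1)$, so that orthogonal complement is exactly $\row(D)$, and the orthogonal projection onto it is $D^\dagger D$. Hence
\[
\theta_0 - \bar\theta_0\1 \;=\; D^\dagger D\theta_0 \;=\; D^\dagger z,
\qquad z := D\theta_0,
\]
where by the definition of $\cT_d(\BR_n)$ we have $\|z\|_1 \le \BR_n$.

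The key step is then to pass from this $\ell_1$ bound on $z$ to an $\ell_2$ bound on $D^\dagger z$. Writing $D^\dagger z = \sum_{\ell=1}^m z_\ell (D^\dagger)_{\cdot\ell}$ as a combination of the columns of $D^\dagger$, applying the triangle inequality, and pulling out the largest column norm $M_n$ gives
\[
\|D^\dagger z\|_2 \;\le\; \sum_{\ell=1}^m |z_\ell|\,\|(D^\dagger)_{\cdot\ell}\|_2 \;\le\; M_n \|z\|_1 \;\le\; M_n \BR_n .
\]
Squaring this and substituting back into the risk decomposition yields the claimed bound $(\sigma^2 + \BR_n^2 M_n^2)/n$. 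The concluding sentence of the lemma, giving $M_n = O(\sqrt{\log n})$ for $d=2$ and $M_n = O(1)$ for $d\ge 3$, is quoted directly from Proposition 4 of \cite{hutter2016optimal}, so no further argument is required.

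I do not expect a serious obstacle: the only points needing a little care are the identification $\nul(D) = \mathrm{span}(\1)$ (using connectedness of the grid), which justifies $\theta_0 - \bar\theta_0\1 = D^\dagger D\theta_0$, and the column-norm triangle inequality that converts the $\ell_1$ constraint on $D\theta_0$ into an $\ell_2$ bound on $D^\dagger D\theta_0$; the rest is a routine computation.
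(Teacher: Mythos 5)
Your proposal is correct and follows essentially the same route as the paper: the same bias--variance decomposition reducing the problem to bounding $\sup\|\theta_0-\bar\theta_0\1\|_2^2$, the same identification of $\theta_0-\bar\theta_0\1$ with $D^\dagger D\theta_0=D^\dagger z$ where $\|z\|_1\le \BR_n$, and an equivalent final step (your column-wise triangle inequality gives the same bound $M_n\BR_n$ that the paper obtains by maximizing the convex function $\|D^\dagger z\|_2^2$ over the $\ell_1$ ball at its vertices).
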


The risk of the identity estimator
\smash{$\htheta^{\mathrm{id}} =  y$} is clearly $\sigma^2$.
Combining this logic
with Lemma \ref{lem:mean} gives the upper bound
\smash{$R_L(\cT_d(\BR_n)) \leq (\sigma^2+\BR_n^2M_n^2)/n
\wedge \sigma^2$}. Comparing this with the lower bound
described in Remark \ref{rem:tv_minimax_linear}, we see that the two
rates basically match, modulo the $M_n^2$ factor in the upper bound,
which only provides an extra \smash{$\log{n}$} factor when $d=2$.
The takeaway message: in the sense of max risk, the best
linear smoother does not perform much better than the trivial
estimators.

Additional empirical experiments, similar to those shown
in Figure \ref{fig:onehot}, are given in the appendix.


\section{Analysis over Sobolev classes}
\label{sec:sobolev_class}

Our first result here is a lower bound on the minimax risk
of the Sobolev class \smash{$\cS_d(\BR_n')$} in
\eqref{eq:sobolev_class}.

\begin{theorem}
\label{thm:sobolev_minimax}
For a universal constant $c>0$,
\begin{equation*}
R(\cS_d(\BR_n')) \geq \frac{c}{n} \Big(
(n\sigma^2)^\frac{2}{d+2} (\BR_n')^\frac{2d}{d+2} \wedge n
\sigma^2 \wedge n^{2/d}(\BR_n')^2 \Big) + \frac{\sigma^2}{n}.
\end{equation*}
\end{theorem}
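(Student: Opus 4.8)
The plan is to exploit the fact that $\cS_d(\BR_n')$ becomes an ellipsoid in the Gaussian sequence model once we rotate into the Laplacian eigenbasis, and then to lower bound its minimax risk by embedding a well-chosen hypercube and applying Assouad's lemma. First I would diagonalize: write $L = V \Sigma V^T$ with $\Sigma = \diag(\rho_1,\ldots,\rho_n)$ and $0=\rho_1<\rho_2\le\cdots\le\rho_n$ as in \eqref{eq:le}, and set $a = V^T\theta$, $z = V^T y$. Then \eqref{eq:model} becomes $z_j \sim N(a_j,\sigma^2)$ independently, $\MSE$ is unchanged since $V$ is orthogonal, and the constraint $\|D\theta\|_2^2 = \theta^T L\theta \le (\BR_n')^2$ reads $\sum_{j=1}^n \rho_j a_j^2 \le (\BR_n')^2$. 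Because $\rho_1=0$, the coordinate $a_1$ is unconstrained, so $\cS_d(\BR_n')$ contains the whole line $\R\1$; estimating over this line alone already forces $R(\cS_d(\BR_n'))\ge \sigma^2/n$, which is the additive term in the statement. It remains to produce the $\tfrac1n\min\{\cdot,\cdot,\cdot\}$ term.

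The key quantitative input is a bound on the grid Laplacian eigenvalues. By the Kronecker-sum structure the eigenvalues are $\sum_{i=1}^d 2\big(1-\cos(\pi k_i/\ell)\big)$ over multi-indices $(k_1,\ldots,k_d)\in\{0,\ldots,\ell-1\}^d$, $\ell=n^{1/d}$. Taking all $k_i$ in $\{0,1,\ldots,m\}$ with $m \asymp \sqrt{t}\,\ell$ exhibits at least $c\,t^{d/2}n$ eigenvalues that are $O(t)$, valid whenever $c n^{-2/d}\le t\le c'$; equivalently, the $j$-th smallest eigenvalue obeys $\rho_j = O\big((j/n)^{2/d}\big)$ for $2\le j\le n$, and hence $\sum_{j=2}^{k+1}\rho_j = O\big(k^{1+2/d}/n^{2/d}\big)$ for $1\le k\le n-1$. (For $d=1$ this is the classical $\rho_{j+1}=2(1-\cos(\pi j/n))$; for general $d$ it is a routine lattice-point count together with the cosine approximation $1-\cos x \asymp x^2$ on the relevant range.)

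Now fix $k\in\{1,\ldots,n-1\}$ and set $\tau^2 = c'' (\BR_n')^2 n^{2/d}/k^{1+2/d}$, with $c''$ small enough that $\tau^2\sum_{j=2}^{k+1}\rho_j \le (\BR_n')^2$. Then the hypercube $\{a : |a_j|\le\tau$ for $2\le j\le k+1$, and $a_j=0$ otherwise$\}$ lies inside $\cS_d(\BR_n')$, and Assouad's lemma (with per-coordinate vertex spacing $\min\{\tau,\sigma\}$, which keeps adjacent hypotheses a constant Hellinger distance apart) shows the minimax risk over it, hence over $\cS_d(\BR_n')$, is at least a constant times $\tfrac{k}{n}\min\{\tau^2,\sigma^2\}$. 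Optimizing over $k\in\{1,\ldots,n\}$: the unconstrained optimum is $k^\star \asymp \big((\BR_n')^2 n^{2/d}/\sigma^2\big)^{d/(d+2)}$. If $k^\star\in[1,n]$, take $k\asymp k^\star$ (so $\tau^2\asymp\sigma^2$) and recover $\tfrac1n (n\sigma^2)^{2/(d+2)}(\BR_n')^{2d/(d+2)}$; if $k^\star>n$, take $k=n$ and recover $\sigma^2 = \tfrac1n\cdot n\sigma^2$; if $k^\star<1$, take $k=1$ (so $\tau^2<\sigma^2$) and recover $\tfrac1n n^{2/d}(\BR_n')^2$. These three regimes are exactly the three entries of the minimum, and adding back the $\sigma^2/n$ from the null-space line gives the claimed bound.

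The real work, I expect, is the eigenvalue estimate on the grid: making the lattice-point count and the $1-\cos x\asymp x^2$ approximation rigorous with explicit constants over the full range of $t$, and then matching the induced admissible ranges of $k$ to the three regimes cleanly enough that the final answer is genuinely $\tfrac{c}{n}\min\{\cdot,\cdot,\cdot\}+\sigma^2/n$ and not merely rate-equivalent to it. The sequence-model half — ellipsoid $\to$ hypercube $\to$ Assouad, with the optimization over $k$ — is standard (it is the usual route to Pinsker-type lower bounds).
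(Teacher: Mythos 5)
Your proposal is correct and lands on the same three-regime answer, but it reaches the key lower bound by a genuinely different route. The shared skeleton is: rotate into the Laplacian eigenbasis so that $\cS_d(\BR_n')$ becomes an ellipsoid $\{\gamma : \sum_i \rho_i\gamma_i^2 \le (\BR_n')^2\}$ in a Gaussian sequence model, peel off the unconstrained null-space coordinate for the $\sigma^2/n$ term, bound the partial sums of grid-Laplacian eigenvalues by $\sum_{(i_1,\ldots,i_d)\in\{1,\ldots,\tau\}^d}\rho_{i_1,\ldots,i_d} \le c\,\tau^{d+2}/\ell^2$ (your $\sum_{j\le k+1}\rho_j = O(k^{1+2/d}/n^{2/d})$ with $k=\tau^d$ is the same estimate, and the paper proves it by exactly the $\sin x \le x$ bound you sketch), embed a hyperrectangle supported on the low-frequency coordinates, and optimize its size. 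Where you diverge is in lower-bounding the risk of that hyperrectangle: the paper invokes the Donoho--Liu--MacGibbon machinery---Theorem 7 (hardest rectangular subproblem for orthosymmetric, quadratically convex sets) plus Lemma 6 (the Ibragimov--Khasminskii-type bound $R \ge \tfrac45 R_L$)---and then computes the minimax \emph{linear} risk of the rectangle in closed form as $\tfrac1n\sum_i t_i^2\sigma^2/(t_i^2+\sigma^2)$; you instead apply Assouad's lemma directly to a hypercube with half-side $\min\{\tau,\sigma\}$, getting $\gtrsim \tfrac{k}{n}\min\{\tau^2,\sigma^2\}$, which is the same quantity up to constants. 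Your route is more elementary and self-contained (no quadratic convexity theory needed); the paper's route buys a sharper constant and, more importantly, produces the minimax linear risk of the ellipsoid along the way, which is what certifies in Theorem \ref{thm:le_ls} that linear smoothers are optimal over these classes. One small point to tighten: to get the bound in the exact additive form $c(\cdots)/n + \sigma^2/n$ rather than a maximum of two separate lower bounds (which would cost a factor of $2$ on the $\sigma^2/n$ term), use the fact that your parameter set contains the product of the free line $\R e_1$ with the hypercube, and that the minimax risk of a product of independent coordinate problems is the sum of the coordinatewise minimax risks---this is precisely the decomposition the paper performs in \eqref{eq:decompose}.
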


Elegant tools for minimax analysis from \cite{donoho1990minimax},
which leverage the fact that the ellipsoid \smash{$\cS_d(\BR_n')$} is
orthosymmetric and quadratically convex (after a rotation), are used
to prove the result.

The next theorem gives upper bounds, certifying
that the above lower bound is tight, and showing that Laplacian
eigenmaps and Laplacian smoothing, both linear smoothers, 
are optimal over \smash{$\cS_d(\BR_n')$} for all $d$ and for $d=1,2,$ 
or $3$ respectively. 

\begin{theorem}
\label{thm:le_ls}
For Laplacian eigenmaps, \smash{$\htheta^{\mathrm{LE}}$} in
\eqref{eq:le}, with \smash{$k \asymp ((n (\BR'_n)^d)^{2/(d+2)} \vee 1) 
  \wedge n$}, we have
\begin{equation*}
\sup_{\theta_0 \in \cS_d(\BR_n')}
\E \big[\MSE(\htheta^{\mathrm{LE}}, \theta_0)\big] \leq
\frac{c}{n} \Big( (n\sigma^2)^{\frac{2}{d+2}}
(\BR_n')^{\frac{2d}{d+2}} \wedge n \sigma^2
\wedge n^{2/d}(\BR_n')^2 \Big) + \frac{c\sigma^2}{n},
\end{equation*}
for a universal constant $c>0$, and $n$ large enough.
When $d=1,2,$ or $3$, the same bound holds for Laplacian smoothing 
\smash{$\htheta^{\mathrm{LS}}$} in \eqref{eq:le}, with
\smash{$\lambda \asymp (n/(\BR'_n)^2)^{2/(d+2)}$} (and a
possibly different constant $c$).
\end{theorem}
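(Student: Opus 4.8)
The plan is to analyze both linear smoothers through their action in the eigenbasis of the graph Laplacian $L = V\Sigma V\T$, via an exact bias--variance decomposition. Write $\theta_0 = \sum_{j=1}^n \alpha_j V_j$ with $\alpha_j = V_j\T\theta_0$; then the Sobolev constraint reads $\theta_0\T L \theta_0 = \sum_{j=1}^n \rho_j \alpha_j^2 \le (\BR_n')^2$. The single most important ingredient is the eigenvalue growth of the grid Laplacian: for a $d$-dimensional grid with $n$ nodes, the $j$th smallest eigenvalue satisfies $\rho_j \asymp (j/n)^{2/d}$, which follows from a Weyl-type lattice-point count applied to the explicit eigenvalues $\sum_{r=1}^d \big(2 - 2\cos(\pi (k_r-1)/n^{1/d})\big)$, $(k_1,\dots,k_d)\in\{1,\dots,n^{1/d}\}^d$, of the grid. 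I would record this as a preliminary lemma (it is needed elsewhere in the paper as well).

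For Laplacian eigenmaps, $\htheta^{\mathrm{LE}} = V_{[k]}V_{[k]}\T y$ is the orthogonal projection onto $\mathrm{span}(V_1,\dots,V_k)$, so its risk decomposes exactly as $\E\|\htheta^{\mathrm{LE}} - \theta_0\|_2^2 = \sum_{j>k}\alpha_j^2 + k\sigma^2$. The variance term is $k\sigma^2$; for the bias, monotonicity of $\rho_j$ gives $\sum_{j>k}\alpha_j^2 \le \rho_{k+1}^{-1}\sum_{j>k}\rho_j\alpha_j^2 \le (\BR_n')^2/\rho_{k+1} \asymp (\BR_n')^2 (n/k)^{2/d}$. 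Hence $\MSE \le \tfrac1n\big(k\sigma^2 + c(\BR_n')^2 (n/k)^{2/d}\big)$ uniformly over $\cS_d(\BR_n')$, and balancing the two terms yields $k \asymp (n(\BR_n')^d)^{2/(d+2)}$ (with $\sigma$ held fixed) and the stated risk. The truncation $k \in \{1,\dots,n\}$ produces the two extra terms in the minimum: $k=1$ gives the bias-dominated term $n^{2/d}(\BR_n')^2$, and $k=n$ (i.e.\ $\htheta^{\mathrm{LE}}=y$) gives $n\sigma^2$; the residual $\sigma^2/n$ is the variance floor from $k\ge1$.

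For Laplacian smoothing, $\htheta^{\mathrm{LS}} = (I+\lambda L)^{-1}y$ acts on the $j$th eigencomponent by multiplication by $(1+\lambda\rho_j)^{-1}$, so $\E\|\htheta^{\mathrm{LS}}-\theta_0\|_2^2 = \sum_{j}\big(\tfrac{\lambda\rho_j}{1+\lambda\rho_j}\big)^2\alpha_j^2 + \sigma^2\sum_j (1+\lambda\rho_j)^{-2}$. For the bias I would use $(1+\lambda\rho_j)^2 \ge 4\lambda\rho_j$, which gives $\big(\tfrac{\lambda\rho_j}{1+\lambda\rho_j}\big)^2 \le \tfrac{\lambda}{4}\rho_j$ and hence a bias of at most $\tfrac{\lambda}{4}\sum_j\rho_j\alpha_j^2 \le \tfrac{\lambda}{4}(\BR_n')^2$, uniformly over the class. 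The variance is the delicate part: inserting $\rho_j \gtrsim (j/n)^{2/d}$ and splitting the sum at the index $j^\ast \asymp n\lambda^{-d/2}$ where $\lambda\rho_{j^\ast}\asymp 1$, the low-frequency part contributes $\asymp j^\ast$ and the high-frequency tail contributes $\asymp \lambda^{-2}n^{4/d}\sum_{j>j^\ast} j^{-4/d}$. This tail sum is dominated by its first term, of order $(j^\ast)^{1-4/d}$, precisely when $4/d>1$, i.e.\ $d\le3$, in which case the full variance sum is $\asymp n\lambda^{-d/2}$ and the variance term of the MSE is $\asymp \sigma^2\lambda^{-d/2}$; for $d\ge4$ the tail is instead controlled by the top of the spectrum and the filter decays too slowly, which is exactly why the statement is restricted to $d\le3$. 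With $\MSE \lesssim \tfrac{\lambda(\BR_n')^2}{n} + \sigma^2\lambda^{-d/2}$, balancing gives $\lambda \asymp (n/(\BR_n')^2)^{2/(d+2)}$ and the same risk bound as for eigenmaps; the two remaining terms in the minimum come from $\lambda\to0$ (giving $\sigma^2$) and $\lambda$ large (giving the Poincar\'e-type bias $\sum_{j\ge2}\alpha_j^2 \le (\BR_n')^2/\rho_2 \asymp n^{2/d}(\BR_n')^2$), and the $\sigma^2/n$ floor is automatic from the null eigenvector $\rho_1=0$.

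The main obstacle is the variance estimate for Laplacian smoothing: one must convert $\sum_{j=1}^n (1+\lambda\rho_j)^{-2}$ into the clean rate $n\lambda^{-d/2}$, which requires the two-sided eigenvalue scaling together with a careful treatment of the tail $\sum j^{-4/d}$, whose convergence threshold at $d=3$ is exactly the phase transition forcing the dimension restriction. Everything else---the eigenmaps decomposition, the AM--GM bias bound, and the elementary optimizations over $k$ and $\lambda$ (including the boundary cases)---is routine.
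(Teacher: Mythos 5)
Your proposal is correct and follows essentially the same route as the paper's proof: an exact bias--variance decomposition in the Laplacian eigenbasis, a bound on the bias via the Sobolev constraint and a lower bound on $\rho_{k+1}$ (equivalently the AM--GM bound $\lambda\rho_j/(1+\lambda\rho_j)^2\le 1/4$ for smoothing), and a balancing of the two terms with the boundary cases $k\in\{1,n\}$, $\lambda\in\{0,\infty\}$ supplying the other arms of the minimum. The only cosmetic difference is that you control the variance sum $\sum_j(1+\lambda\rho_j)^{-2}$ by sorting the eigenvalues via a Weyl count $\rho_j\asymp(j/n)^{2/d}$ and splitting the sum at $\lambda\rho_{j^*}\asymp 1$, whereas the paper compares the multi-indexed sum to an integral in spherical coordinates and evaluates $\int_0^{O(\sqrt{\lambda})} u^{d-1}(1+u^2)^{-2}\,du$ explicitly---the same $d\le 3$ convergence threshold, with the $d=4$ logarithm, appears identically in both.
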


\begin{remark}
As shown in the proof, Laplacian smoothing is nearly minimax
rate optimal over \smash{$\cS_d(\BR_n')$} when $d=4$, just incurring
an extra log factor. It is unclear to us whether this method is
still (nearly) optimal when $d \geq 5$; based on insights from
our proof technique, we conjecture that it is not.
\end{remark}



\section{A phase transition, and adaptivity}
\label{sec:adaptivity}

The TV and Sobolev classes in \eqref{eq:tv_class} and
\eqref{eq:sobolev_class}, respectively, display a curious
relationship.  We reflect on Theorems \ref{thm:tv_minimax} and
\ref{thm:sobolev_minimax}, under the canonical scaling with
\smash{$\BR_n \asymp n^{1-1/d}$} and
\smash{$\BR_n' \asymp n^{1/2-1/d}$}, that, recall, guarantees
\smash{$\cS_d(\BR_n') \subseteq \cT_d(\BR_n)$}.  (This is done for
concreteness; statements could also be made outside of this case,
subject to an appropriate relationship with
\smash{$\BR_n/\BR_n' \asymp \sqrt{n}$}.)
When $d=1$, both the TV and Sobolev classes have a
minimax rate of \smash{$n^{-2/3}$} (this TV result is
actually due to \cite{minimaxwave}, as stated in
\eqref{eq:minimax_1d}, not Theorem \ref{thm:tv_minimax}).  When $d=2$,
both the TV and Sobolev classes again have the same minimax rate of
\smash{$n^{-1/2}$}, the caveat being that the rate for TV class has an
extra \smash{$\sqrt{\log{n}}$} factor.  But for all $d \geq 3$, the
rates for the canonical TV and Sobolev classes differ, and
the smaller Sobolev spaces have faster rates than their inscribing TV
spaces.
This may be viewed as a phase transition at
$d=3$; see Table \ref{tab:phase_transition}.

\renewcommand\arraystretch{1.25}
\begin{table}[t]
\centering
\begin{tabular}{c|ccc}
  Function class & Dimension 1 & Dimension 2 & Dimension $d \geq 3$
  \\ \hline
  TV ball \smash{$\cT_d(n^{1-1/d})$} & $n^{-2/3}$ &
  $n^{-1/2}\sqrt{\log{n}}$ & $n^{-1/d}\sqrt{\log{n}}$ \\
  Sobolev ball \smash{$\cS_d(n^{1/2-1/d})$} & $n^{-2/3}$
  & $n^{-1/2}$ & $n^{-\frac{2}{2+d}}$
  \end{tabular}
  \smallskip
  \caption{\small\it Summary of rates for canonically-scaled TV
    and Sobolev spaces.}
  \label{tab:phase_transition}
\end{table}
\renewcommand\arraystretch{1}

\begin{figure}[t]
\centering
\begin{tabular}{cc}
Linear signal in 2d & Linear signal in 3d \\
{\includegraphics[width=0.45\textwidth]{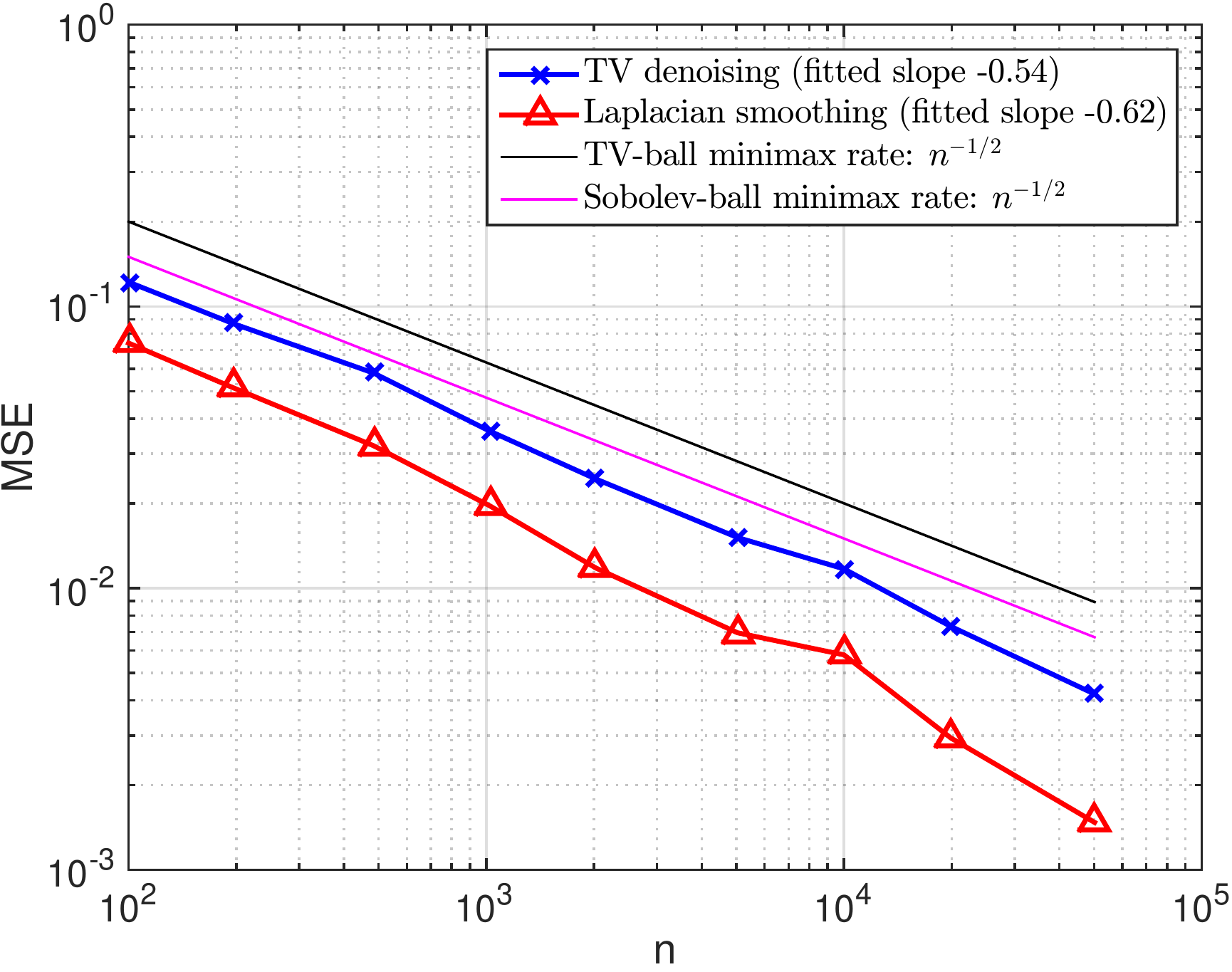}} &
{\includegraphics[width=0.45\textwidth]{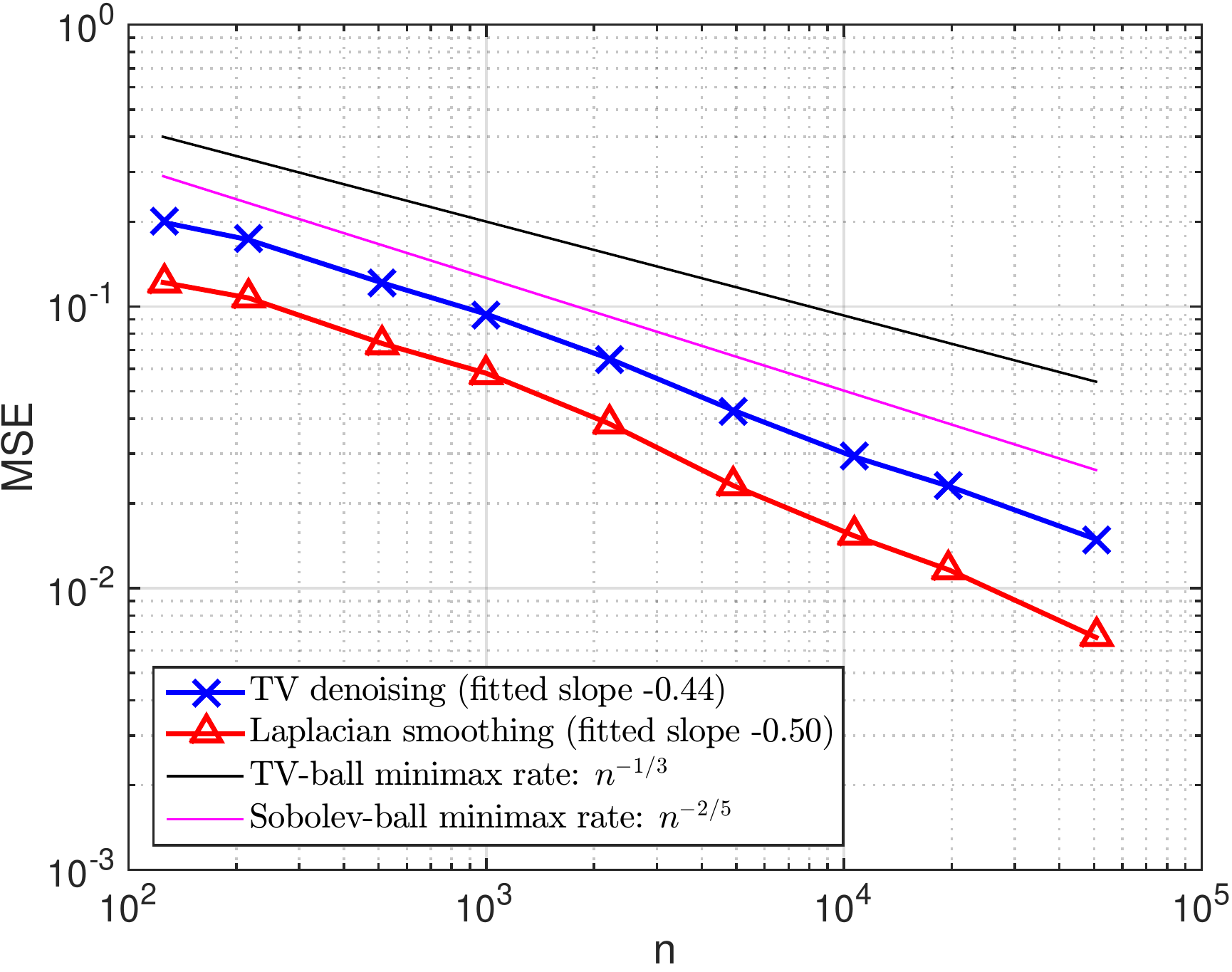}}
\end{tabular}
\vspace{-5pt}
\caption{\small\it MSE curves for estimating a ``linear'' signal, a
  very smooth signal, over 2d and 3d grids. For each $n$, the
  results were averaged over 5 repetitions, and  Laplacian smoothing
  and TV denoising were tuned for best average MSE performance.  The
  signal was set to satisfy
  \smash{$\|D\theta_0\|_2 \asymp n^{1/2-1/d}$},
    matching the canonical scaling.}
\label{fig:linear_signal}
\vspace{-8pt}
\end{figure}

We may paraphrase to say that 2d is just like 1d, in that expanding
the Sobolev ball into a larger TV ball does not hurt the minimax rate,
and methods like TV denoising are automatically {\it adaptive}, i.e.,
optimal over both the bigger and smaller classes.  However,
as soon as we enter the 3d world, it is no longer clear whether TV
denoising can adapt to the smaller, inscribed Sobolev ball, whose
minimax rate is faster, \smash{$n^{-2/5}$} versus \smash{$n^{-1/3}$}
(ignoring log factors). Theoretically, this is an interesting open
problem that we do not approach in this paper and leave to future
work.

We do, however, investigate the matter empirically: see Figure
\ref{fig:linear_signal}, where we run Laplacian smoothing and TV
denoising on a highly smooth ``linear'' signal $\theta_0$.  This
is constructed so that each component $\theta_i$ is proportional to
$i_1+i_2+\ldots+i_d$ (using the multi-index notation
$(i_1,\ldots,i_d)$ of \eqref{eq:holder_class} for grid location $i$),
and the Sobolev norm is \smash{$\|D\theta_0\|_2 \asymp
  n^{1/2-1/d}$}.
Arguably, these are among the ``hardest'' types of functions for TV
denoising to handle.  The left panel, in 2d, is a case in which we
know that TV denoising attains the minimax rate; the right panel, in
3d, is a case in which we do not, though empirically, TV denoising
surely seems to be doing better than the slower minimax rate of
\smash{$n^{-1/3}$} (ignoring log terms) that is associated with the
larger TV ball.

Even if TV denoising is shown to be
minimax optimal over the inscribed Sobolev balls when $d \geq 3$, note
that this does not necessarily mean that we should scrap Laplacian
smoothing in favor of TV denoising, in all problems.  Laplacian
smoothing is the unique Bayes estimator in a normal means model under
a certain Markov random field prior (e.g., \cite{sharpnack2010});
statistical decision theory therefore tells that it is
{\it admissible}, i.e., no other estimator---TV denoising
included---can uniformly dominate it.


\section{Discussion}
\label{sec:discussion}

We conclude with a quote from Albert Einstein: ``Everything
should be made as simple as possible, but no simpler''.
In characterizing the minimax rates for TV classes,
defined over $d$-dimensional grids, we have shown that simple methods
like Laplacian smoothing and Laplacian eigenmaps---or even in fact,
all linear estimators---must be passed up in favor of
more sophisticated, nonlinear estimators, like TV denoising, if one
wants to attain the optimal max risk.
Such a result was previously known when $d=1$; our work
has extended it to all dimensions $d \geq 2$.
We also characterized
the minimax rates over discrete Sobolev classes, revealing an
interesting phase transition where the optimal rates over TV and
Sobolev spaces, suitably scaled, match when $d=1$ and $2$ but diverge
for $d \geq 3$. It is an open question as to whether an estimator like
TV denoising can be optimal over both spaces, for all $d$.


\appendix

\section{Proofs}

We present proofs of all results, according to the order in which they
appear in the paper.  

\subsection{Proof of Lemma \ref{lem:scaling}
  (canonical scaling)}  

Suppose that \smash{$\theta \in \cH_d(1)$} that is a discretization of
a 1-Lipshitz function $f$, i.e.,
\smash{$\theta_i = f(i_1/\ell\ldots,i_d/\ell)$}, $i=1,\ldots,n$.
We first we compute and bound its squared Sobolev norm
\begin{align*}
\|D \theta\|_2^2 = \sum_{(i,j) \in E} (\theta_i - \theta_j)^2 &=
\sum_{(i,j) \in E} \big( f(i_1/\ell,\ldots,i_d/\ell) -
f(j_1/\ell,\ldots,j_d/\ell)\big)^2 \\
&\leq \sum_{(i,j) \in E} \big\|
(i_1/\ell,\ldots,i_d/\ell) -
(j_1/\ell,\ldots,j_d/\ell) \big\|_\infty^2 \\
&= m/\ell^2,
\end{align*}
where, recall, we denote by $m=|E|$ the number of edges in the grid.
In the second line we used the 1-Lipschitz property of
$f$, and in the third we used that multi-indices corresponding to
adjacent locations on the grid are exactly 1 apart, in $\ell_\infty$
distance. Thus we see that setting
\smash{$\BR_n'=\sqrt{m}/\ell$} gives the desired containment
\smash{$\cS_d(\BR_n') \supseteq \cH_d(1)$}.  It is always true
that $m \asymp n$ for a $d$-dimensional grid (though the constant may
depend on $d$), so that \smash{$\sqrt{m}/\ell \asymp n^{1/2-1/d}$}.
This completes the proof for the Sobolev class scaling.

As for TV class scaling, the result follows from the simple
fact that \smash{$\|x\|_1 \leq \sqrt{m} \|x\|_2$} for any $x \in
\R^m$, so that we may take \smash{$\BR_n = \sqrt{m} \BR_n' =
  n^{1-1/d}$}.
\hfill\qedsymbol

\subsection{Proof of Theorem \ref{thm:tv_minimax} 
(minimax rates over TV classes)}

Here and henceforth,
we use the notation \smash{$B_p(r) = \{ x : \|x\|_p \leq r \}$} for
the $\ell_p$ ball of radius $r$, where $p,r > 0$ (and the ambient
dimension will be determined based on the context).

We begin with a very simple lemma, that embeds an $\ell_1$ ball inside
the TV ball \smash{$\cT_d(\BR_n)$}.

\begin{lemma}
\label{lem:subset}
Let $G$ be a graph with maximum degree \smash{$d_{\max}$}, and
let $D\in\R^{m\times n}$ be its incidence matrix. Then for any
$r>0$, it holds that \smash{$B_1(r/d_{\max}) \subseteq T_d(r)$}.
\end{lemma}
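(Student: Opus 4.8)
The plan is to reduce the claim to the single elementary inequality
\smash{$\|D\theta\|_1 \le d_{\max}\|\theta\|_1$},
valid for the incidence matrix $D$ of \emph{any} graph whose maximum degree is \smash{$d_{\max}$}. Granting this, the containment is immediate: if \smash{$\theta \in B_1(r/d_{\max})$}, i.e.\ \smash{$\|\theta\|_1 \le r/d_{\max}$}, then \smash{$\|D\theta\|_1 \le d_{\max}\|\theta\|_1 \le d_{\max}\cdot r/d_{\max} = r$}, so by the definition \eqref{eq:tv_class} we get \smash{$\theta \in \cT_d(r)$}, which is exactly \smash{$B_1(r/d_{\max}) \subseteq \cT_d(r)$}.

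To prove the key inequality I would start from \smash{$\|D\theta\|_1 = \sum_{(i,j)\in E} |\theta_i - \theta_j|$}, which follows from the description of the rows of $D$, apply the triangle inequality \smash{$|\theta_i - \theta_j| \le |\theta_i| + |\theta_j|$} termwise, and then re-index the resulting sum over edges as a sum over vertices. Since each vertex $i$ is incident to exactly \smash{$\deg(i)$} edges, the quantity \smash{$|\theta_i|$} is counted \smash{$\deg(i)$} times, giving \smash{$\sum_{(i,j)\in E}(|\theta_i| + |\theta_j|) = \sum_{i=1}^n \deg(i)\,|\theta_i| \le d_{\max}\sum_{i=1}^n |\theta_i| = d_{\max}\|\theta\|_1$}.

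There is no genuine obstacle here: the lemma is a one-line consequence of the triangle inequality together with a degree count, and the only point requiring a moment's care is the bookkeeping in passing from the edge sum to the vertex sum, namely that each $|\theta_i|$ appears once for every edge incident to $i$, hence at most \smash{$d_{\max}$} times in total.
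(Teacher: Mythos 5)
Your proof is correct and is essentially the same argument as the paper's: the paper writes \smash{$\|D\theta\|_1 = \|\sum_i D_i\theta_i\|_1 \le \sum_i \|D_i\|_1|\theta_i| \le d_{\max}\|\theta\|_1$} using the columns of $D$, and since \smash{$\|D_i\|_1 = \deg(i)$}, this is exactly your triangle-inequality-plus-degree-count computation in matrix notation. No gap; the reduction to \smash{$\|D\theta\|_1 \le d_{\max}\|\theta\|_1$} and the containment that follows match the paper.
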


\begin{proof}
Write $D_i$ for the $i$th column of $D$.
The proof follows from the observation that, for any $\theta$,
\begin{equation*}
\|D\theta\|_1
= \bigg\| \sum_{i=1}^n D_i \theta_i \bigg\|_1
\leq \sum_{i=1}^n \|D_i\|_1 |\theta_i|
\leq \Big( \max_{i=1,\ldots,n} \|D_i\|_1\Big)
\|\theta\|_1 = d_{\max} \|\theta\|_1.
\end{equation*}
\end{proof}

To prove Theorem \ref{thm:tv_minimax}, we will rely on a result from
\citet{birge2001gaussian}, which gives a lower bound for the risk in a
normal means problem, over $\ell_p$ balls. Another related, earlier
result is that of \citet{donoho1994minimax}; however, the Birge and
Massart result places no restrictions on the radius of the ball in
question, whereas the Donoho and Johnstone result does. Translated
into our notation, the Birge and Massart result is as follows.

\begin{lemma}[Proposition 5 of \citet{birge2001gaussian}]
\label{lem:birge}
Assume i.i.d.\ observations \smash{$y_i \sim
  N(\theta_{0,i},\sigma^2)$}, $i=1,\ldots,n$, and $n \geq 2$.
Then the minimax risk over the $\ell_p$ ball $B_p(r_n)$, where
$0 < p < 2$, satisfies
\begin{equation*}
n \cdot R(B_p(r_n)) \geq
c \cdot
\begin{cases}
\displaystyle
\sigma^{2-p} r_n^p
\bigg[1+\log\bigg(
\frac{\sigma^p n}{r_n^p}\bigg)
\bigg]^{1-p/2} &
\text{if $\sigma\sqrt{\log n}\leq r_n \leq
  \sigma n^{1/p}/\sqrt{\rho_p}$} \\
r_n^2 &\text{if $ r_n< \sigma \sqrt{\log n}$} \\
\sigma^2 n / \rho_p &\text{if
$r_n > \sigma n^{1/p} / \sqrt{\rho}$}
\end{cases}.
\end{equation*}
Here $c>0$ is a universal constant, and $\rho_p > 1.76$ is the unique
solution of $\rho_p\log\rho_p = 2/p$.
\end{lemma}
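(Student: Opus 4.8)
The plan is to lower bound the minimax risk over $B_p(r_n)$ by the Bayes risk of a carefully chosen family of hypotheses supported inside the ball, and to let the sparsity of those hypotheses track the three regimes in the statement. Everything reduces to a single \emph{sparse packing} construction to which Fano's inequality is applied; the three cases then correspond to the optimal sparsity level $k$ being pinned at $1$, lying strictly between $1$ and $n$, or saturating at $n$.

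The main (intermediate) regime is where the work lies. Fixing a sparsity $k$ and a magnitude $\mu$, I would consider vectors supported on exactly $k$ coordinates with nonzero entries $\pm\mu$; these lie in $B_p(r_n)$ precisely when $k\mu^p \le r_n^p$, so I set $\mu = (r_n^p/k)^{1/p}$. By a Gilbert--Varshamov (constant-weight) packing I can extract a subfamily of cardinality $\exp(ck\log(n/k))$ whose members are pairwise separated by $\gtrsim k\mu^2$ in squared $\ell_2$ distance while having pairwise Kullback--Leibler divergence $\lesssim k\mu^2/\sigma^2$. Fano's inequality then forces the minimax risk to be $\gtrsim k\mu^2$ provided the information budget is respected, i.e.\ $k\mu^2/\sigma^2 \lesssim k\log(n/k)$, equivalently $\mu \lesssim \sigma\sqrt{\log(n/k)}$. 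Optimizing $k$ by balancing this against the $\ell_p$ budget sets $\mu \asymp \sigma\sqrt{\log(n/k)}$ and $k \asymp r_n^p \sigma^{-p}(\log(n/k))^{-p/2}$, whence
\[
k\mu^2 \asymp k\sigma^2\log(n/k) \asymp \sigma^{2-p} r_n^p\,[\log(n/k)]^{1-p/2}.
\]
Since $\log(n/k) \asymp \log(\sigma^p n/r_n^p)$ up to lower-order terms, this reproduces the first line of the bound, and the exponent $1-p/2$ emerges automatically from the balancing.

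The two boundary regimes are the endpoints of this optimization. When $r_n < \sigma\sqrt{\log n}$ the information budget cannot support more than a single spike, so the optimal sparsity is $k=1$: I take the $n+1$ hypotheses $\{0\} \cup \{r_n e_j\}_{j=1}^n$, which are pairwise $\asymp r_n$ apart in $\ell_2$ and have pairwise divergence $r_n^2/\sigma^2 \lesssim \log n$, so Fano yields risk $\gtrsim r_n^2$, matching the second line. When $r_n > \sigma n^{1/p}/\sqrt{\rho_p}$ the sparsity saturates at $k=n$ and the ball is effectively unconstrained at the relevant scale, so a dense i.i.d.\ $\pm\mu$ prior with $\mu = r_n/n^{1/p} \gtrsim \sigma$ drives the per-coordinate Bayes risk to a constant multiple of $\sigma^2$, giving the plateau $\gtrsim \sigma^2 n/\rho_p$.

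I expect the main obstacle to be sharpening the rate-level Fano bound into the precise constant $\rho_p$ solving $\rho_p\log\rho_p = 2/p$, which governs exactly where the optimal sparsity meets $k=n$. This requires a non-asymptotic analysis of the magnitude--sparsity tradeoff at the upper transition, rather than the order-of-magnitude balancing above, and is the point at which the Birg\'e--Massart argument is genuinely delicate; a secondary technical nuisance is resolving the fixed-point relation $k \asymp r_n^p\sigma^{-p}(\log(n/k))^{-p/2}$ to confirm $\log(n/k)\asymp\log(\sigma^p n/r_n^p)$, and hence that the stated cases tile the entire parameter range.
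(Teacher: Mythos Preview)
The paper does not prove this lemma at all: it is quoted verbatim as Proposition~5 of \citet{birge2001gaussian} and used as a black box in the proof of Theorem~\ref{thm:tv_minimax}. There is therefore no ``paper's own proof'' to compare against; the authors simply invoke the result.

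Your sketch is a sensible outline of how one could establish such a lower bound from scratch, and the sparse-packing-plus-Fano route does recover the correct rates in each regime. You are also right that the delicate part is the exact constant $\rho_p$ governing the upper transition; Birg\'e and Massart obtain it through a careful Bayesian (prior-based) calculation rather than a packing argument, and Fano-type bounds typically lose constants, so your approach as stated would give the three-case structure with the right functional forms but not the specific $\rho_p$ solving $\rho_p\log\rho_p = 2/p$. For the purposes of this paper, however, none of that matters: only the order of the middle-regime bound $\sigma^{2-p} r_n^p [1+\log(\sigma^p n/r_n^p)]^{1-p/2}$ with $p=1$ is actually used downstream, and any universal constant suffices.
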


Finally, applying Lemma \ref{lem:birge} to
\smash{$B_1(\BR_n/d_{\max})$} almost gives the lower bound as stated
in Theorem \ref{thm:tv_minimax}. However, note that the
minimax risk in question is trivially lower bounded by $\sigma^2/n$,
because
\begin{align*}
\inf_{\htheta} \; \sup_{\theta_0 \in \cT_d(\BR_n)} \frac{1}{n} \;
\E\|\htheta - \theta_0\|_2^2 &\geq
\inf_{\htheta} \; \sup_{\theta_0 : \theta_{0,1} = \ldots =
  \theta_{0,n}} \;
\frac{1}{n} \sum_{i=1}^n \E (\htheta_i -\theta_{0,1})^2 \\
&= \inf_{\htheta_1} \;
\sup_{\theta_{0,1}} \; \E(\htheta_1-\theta_{0,1})^2 \\
&= \frac{\sigma^2}{n}.
\end{align*}
In the second to last line, the problem is to estimate a 1-dimensional
mean parameter $\theta_{0,1}$, given the observations $y_i \sim
N(\theta_{0,1}, \sigma^2)$, i.i.d., for $i=1,\ldots,n$; this has a
well-known minimax risk of $\sigma^2/n$. What this means for our TV
problem: to derive a
lower bound for the minimax rate over \smash{$\cT_d(\BR_n)$}, we may
take the maximum of the result of applying Lemma \ref{lem:birge} to
\smash{$B_1(\BR_n/d_{\max})$} and
$\sigma^2/n$. One can see that
the term \smash{$\sigma^2/n$} only plays a role for small
\smash{$\BR_n$}, i.e., it effects the case when
\smash{$\BR_n < \sigma d_{\max} \sqrt{\log n}$}, where the lower bound
becomes \smash{$\BR_n^2 /(d_{\max}^2 n) \vee \sigma^2/n$}.
\hfill\qedsymbol

\subsection{Proof of Theorem \ref{thm:tv_minimax_linear} 
  (minimax linear rates over TV classes)}  

First we recall a few definitions, from \citet{donoho1990minimax}.
Given a set $A \subseteq \R^k$, its {\it quadratically convex hull}
\smash{$\mathrm{qconv}(A)$} is defined as
\begin{gather*}
\mathrm{qconv}(A) = \big\{ (x_1,\ldots,x_k) : (x_1^2,\ldots,x_k^2) \in
\mathrm{conv}(A_+^2) \big\}, \quad \text{where} \\
A_+^2 = \big\{ (a_1^2,\ldots,a_k^2) : a \in A, \; a_i \geq 0, \;
i=1,\ldots,k \big\}.
\end{gather*}
(Here \smash{$\mathrm{conv}(B)$} denotes the convex hull of a set
$B$.) Furthermore, the set $A$ is called {\it quadratically convex}
provided that \smash{$\mathrm{qconv}(A) = A$}.  Also, $A$ is called
{\it orthosymmetric} provided that $(a_1,\ldots,a_k) \in A$ implies
\smash{$(\sigma_1 a_1,\ldots, \sigma_k a_k) \in A$}, for any choice of
signs \smash{$\sigma_1,\ldots,\sigma_k \in \{-1,1\}$}.

Now we proceed with the proof. Following from equation (7.2) of
\citet{donoho1990minimax},
\begin{equation*}
\mathrm{qconv}\big(B_1(\BR_n/d_{\max})\big) =
B_2(\BR_n/d_{\max}).
\end{equation*}
Theorem 11 of \citet{donoho1990minimax} states that, for
orthosymmetric, compact sets, such as
\smash{$B_1(\BR_n/d_{\max})$}, the minimax linear risk equals that
of its quadratically convex hull.
Moreover, Theorem 7 of \citet{donoho1990minimax}
tells us that for sets that are orthosymmetric, compact, convex, and
quadratically convex, such as \smash{$B_2(\BR_n/d_{\max})$}, the
minimax linear
risk is the same as the minimax linear risk over the worst rectangular
subproblem.  We consider \smash{$B_\infty(\BR_n/(d_{\max}\sqrt{n}))$},
and abbreviate \smash{$r_n=\BR_n/(d_{\max}\sqrt{n})$}.  It is fruitful
to study rectangles because the problem separates across dimensions,
as in
\begin{align*}
\inf_{\htheta \, \text{linear}} \; \sup_{\theta_0 \in B_\infty(r_n)}
\; \E \bigg[ \frac{1}{n} \sum_{i=1}^n (\htheta_i - \theta_{0,i})^2
\bigg] &= \frac{1}{n} \sum_{i=1}^n \bigg[
\inf_{\htheta_i \, \text{linear}} \; \sup_{|\theta_{0,i}| \leq r_n} \;
\E(\htheta_i - \theta_{0,i})^2 \bigg] \\
& = \inf_{\htheta_1 \, \text{linear}} \;
\sup_{|\theta_{0,1}| \leq r_n} \; \E(\htheta_1 - \theta_{0,1})^2.
\end{align*}
Thus it suffices to compute the minimax linear risk over the 1d class
\smash{$\{ \theta_{0,1} : |\theta_{0,1}| \leq r_n \}$}.  It is easily
shown (e.g., see Section 2 of \citet{donoho1990minimax}) that this is
\smash{$r_n^2\sigma^2/(r_n^2+\sigma_2^2)$}, and so this is precisely
the minimax linear risk for \smash{$B_2(\BR_n/d_{\max})$}, and for
\smash{$B_1(\BR_n/d_{\max})$}.

To get the first lower bound as stated in the theorem, we simply take
a maximum of \smash{$r_n^2\sigma^2/(r_n^2+\sigma_2^2)$}
and $\sigma^2/n$, as the latter is the minimax risk for
estimating a 1-dimensional mean parameter given $n$ observations in
a normal model with variance $\sigma^2$, recall the end of
the proof of Theorem \ref{thm:tv_minimax}. To get the second, we use
the fact that \smash{$2ab/(a+b) \geq \min\{a,b\}$}.  This completes
the proof.
\hfill\qedsymbol

\subsection{Alternative proof of Theorem \ref{thm:tv_minimax_linear}} 

Here, we reprove Theorem \ref{thm:tv_minimax_linear} using elementary
arguments. We write $y=\theta_0+\epsilon$, for \smash{$\epsilon
  \sim N(0,\sigma^2 I)$}. Given an arbitary linear estimator,
\smash{$\htheta=Sy$} for a matrix $S \in \R^{n\times n}$, observe that
\begin{align}
\nonumber
  \E \big[\MSE(\htheta,\theta_0) \big]
  = \frac{1}{n} \E \| \htheta - \theta_0 \|_2^2
  &= \frac{1}{n} \E \| S(\theta_0 + \epsilon) - \theta_0 \|_2^2 \\
\nonumber
  &= \frac{1}{n} \E \| S\epsilon \|_2^2 +
  \frac{1}{n} \|(S-I) \theta_0 \|_2^2 \\
  \label{eq:bias_var}
  &=  \frac{\sigma^2}{n} \| S \|_F^2 +
\frac{1}{n} \|(S-I) \theta_0 \|_2^2,
\end{align}
which we may view as the variance and (squared) bias terms,
respectively.  Now denote by $e_i$ the $i$th standard basis vector,
and consider
\begin{align*}
 \frac{\sigma^2}{n} \| S \|_F^2 +
\bigg(\sup_{\theta_0 : \|D\theta_0\|_1 \leq \BR_n} \;
\frac{1}{n} \|(S-I) \theta_0 \|_2^2 \bigg)
  &\geq \frac{\sigma^2}{n} \|S\|_F^2 +
  \frac{\BR_n^2}{d_{\max}^2 n} \Bigg(
  \max_{i=1,\ldots,n} \; \| (I-S)e_i \|_2^2 \Bigg) \\
  &\geq \frac{\sigma^2}{n} \|S\|_F^2 +
  \frac{\BR_n^2}{d_{\max}^2 n^2}
  \sum_{i=1}^n \| (I-S)e_i \|_2^2 \\
  &= \frac{\sigma^2}{n} \|S\|_F^2 +
  \frac{\BR_n^2}{d_{\max}^2 n^2} \| (I-S) \|_F^2 \\
  &\geq \frac{\sigma^2}{n} \sum_{i=1}^n S_{ii}^2 +
  \frac{\BR_n^2}{d_{\max}^2 n^2} \sum_{i=1}^n (1-S_{ii})^2 \\
  &= \frac{1}{n} \sum_{i=1}^n \bigg(\sigma^2 S_{ii}^2 +
  \frac{\BR_n^2}{d_{\max}^2 n} (1-S_{ii})^2\bigg).
\end{align*}
Here $S_{ii}$, $i=1,\ldots,n$ denote the diagonal entries of $S$.
To bound each term in the sum, we apply the simple inequality
\smash{$ax^2 + b(1-x)^2 \geq ab/(a+b)$} for all $x$ (since a short
calculation shows that the quadratic in $x$ here is minimized at
$x=b/(a+b)$). We may continue on lower bounding the last displayed
expression, giving
\begin{equation*}
 \frac{\sigma^2}{n} \| S \|_F^2 +
\bigg(\sup_{\theta_0 : \|D\theta_0\|_1 \leq \BR_n}
\frac{1}{n} \|(S-I) \theta_0 \|_2^2 \bigg) \geq
\frac{\sigma^2 \BR_n^2}{\BR_n^2 + \sigma^2 d_{\max}^2 n}.
\end{equation*}
Lastly, we may take the maximum of this with $\sigma^2/n$ in order to
derive a final lower bound, as argued in the proof of Theorem
\ref{thm:tv_minimax_linear}.
\hfill\qedsymbol

\subsection{Proof of Lemma \ref{lem:mean} (mean
  estimator over TV classes)} 

For this estimator, the smoother matrix is $S=\1 \1^T/n$ and so
$\|S\|_F^2 = 1$. From \eqref{eq:bias_var}, we have
\begin{equation*}
  \E\big[\MSE(\htheta^{\mathrm{mean}},\theta_0)\big]
  =  \frac{\sigma^2}{n} +
  \frac{1}{n} \| \theta_0 - \bar\theta_0 \1 \|_2^2,
\end{equation*}
where \smash{$\bar\theta_0=(1/n) \sum_{i=1}^n \theta_{0,i}$}.
Now
\begin{align*}
  \sup_{\theta_0 : \|D \theta_0\|_1 \leq \BR_n}
  \frac{1}{n} \| \theta_0 - \bar\theta_0 \1 \|_2^2
  &= \sup_{x \in \row(D) : \|D x\|_1 \leq \BR_n}
   \frac{1}{n} \| x \|_2^2 \\
  &= \sup_{z \in \col(D): \|z\|_1 \leq \BR_n}  \frac{1}{n}
  \| D^\dagger z \|_2^2 \\
  &\leq \sup_{z : \|z\|_1 \leq \BR_n}
  \frac{1}{n} \| D^\dagger z \|_2^2 \\
  &= \frac{\BR_n^2}{n} \max_{i=1,\ldots,n} \;
  \| D^\dagger_i \|_2^2 \\
  &\leq \frac{\BR_n^2 M_n^2}{n},
\end{align*}
which establishes the desired bound.
\hfill\qedsymbol

\subsection{Proof of Theorem \ref{thm:sobolev_minimax}
  (minimax rates over Sobolev classes)}

Recall that we denote by $L=V \Sigma V^T$ the eigendecomposition of
the graph Laplacian $L=D^T D$, where
$\Sigma=\diag(\rho_1,\ldots,\rho_n)$ with $0=\rho_1< \rho_2 \leq
\ldots \leq \rho_n$, and where $V \in \R^{n\times n}$ has
orthonormal columns. Also denote by $D=U \Sigma^{1/2} V^T$ the
singular value decomposition of the edge incidence matrix $D$, where
$U \in \R^{m\times n}$ has orthonormal columns.\footnote{When
$d=1$, we have $m=n-1$ edges, and so it is not be possible for $U$ to
have orthonormal columns; however, we can just take its first column
to be all 0s, and take the rest as the eigenbasis for $\R^{n-1}$, and all
the arguments given here will go through.} First notice that
\begin{equation*}
\|D\theta_0\|_2 = \|U \Sigma^{1/2} V^T \theta_0\|_2 = \|\Sigma^{1/2}
V^T \theta_0\|_2.
\end{equation*}
This suggests that a rotation by $V^T$ will further simplify the
minimax risk over \smash{$\cS_d(\BR_n')$}, i.e.,
\begin{align}
\nonumber
\inf_{\htheta} \;
\sup_{\theta_0 : \|\Sigma^{1/2} V^T \theta_0\|_2 \leq \BR_n'}
\frac{1}{n} \E \|\htheta - \theta_0\|_2^2 &=
\inf_{\htheta} \;
\sup_{\theta_0 :  \|\Sigma^{1/2} V^T \theta_0\|_2 \leq \BR_n'}
\frac{1}{n} \E \|V^T \htheta - V^T \theta_0\|_2^2 \\
\label{eq:rotated}
&= \inf_{\hat\gamma} \;
\sup_{\gamma_0 : \|\Sigma^{1/2} \gamma_0\|_2 \leq \BR_n'}
\frac{1}{n} \E \|\hat\gamma - \gamma_0\|_2^2,
\end{align}
where we have rotated and now consider the new parameter
\smash{$\gamma_0 = V^T \theta_0$}, constrained to lie in
\begin{equation*}
\cE_d(\BR_n') = \bigg\{ \gamma : \sum_{i=2}^n \rho_i \gamma_i^2
\leq (\BR_n')^2 \bigg\}.
\end{equation*}
To be clear, in the rotated setting \eqref{eq:rotated} we observe
a vector
\smash{$y' = V^T y \sim N(\gamma_0, \sigma^2 I)$},
and the goal is to estimate the mean parameter $\gamma_0$.
Since there are no constraints along the first dimension, we can
separate out the MSE in \eqref{eq:rotated} into that incurred on the
first component, and all other components. Decomposing
 $\gamma_0=(\alpha_0,\beta_0) \in \R^{1 \times (n-1)}$,
with similar notation for an estimator \smash{$\hat\gamma$},
\begin{align}
\nonumber
\inf_{\hat\gamma} \;
\sup_{\gamma_0 \in \cE_d(\BR_n')} \;
\frac{1}{n} \E \|\hat\gamma - \gamma_0\|_2^2 &=
\inf_{\halpha} \;
\sup_{\alpha_0} \; \frac{1}{n} \E(\halpha - \alpha_0)^2 +
\inf_{\hbeta} \; \sup_{\beta_0 \in P_{-1} (\cE_d(\BR_n'))} \;
\frac{1}{n} \E \|\hbeta - \beta_0 \|_2^2 \\
\label{eq:decompose}
&= \frac{\sigma^2}{n} +\inf_{\hbeta} \;
\sup_{\beta_0 \in P_{-1} (\cE_d(\BR_n'))}
\frac{1}{n} \E \|\hbeta - \beta_0 \|_2^2,
\end{align}
where \smash{$P_{-1}$} projects onto all
coordinate axes but the 1st, i.e., \smash{$P_{-1}(x) = (0,x_2,\ldots,x_n)$},
and in the second line we have used the fact that the minimax risk for
estimating a 1-dimensional parameter $\alpha_0$ given an observation
$z \sim N(\alpha_0, \sigma^2)$ is simply $\sigma^2$.

Let us lower bound the
second term in \eqref{eq:decompose}, i.e.,
\smash{$R(P_{-1}(\cE_d(\BR_n')))$}.
The ellipsoid \smash{$P_{-1} ( \cE_d(\BR_n'))$} is orthosymmetric,
compact, convex, and quadratically convex, hence Theorem 7 in
\citet{donoho1990minimax} tells us that its minimax linear risk is the
minimax linear risk of its hardest rectangular subproblem.  Further,
Lemma 6 in \citet{donoho1990minimax} then tells us the minimax linear
risk of its hardest rectangular subproblem is, up to a constant
factor, the same as the minimax (nonlinear) risk of the full problem.
More precisely, Lemma 6 and Theorem 7 from \citet{donoho1990minimax}
imply
\begin{equation}
\label{eq:rect_risk}
R(P_{-1}(\cE_d(\BR_n'))) \geq \frac{4}{5} R_L(P_{-1}(\cE_d(\BR_n'))) =
\sup_{H \subseteq P_{-1}(\cE_d(\BR_n'))} R_L(H),
\end{equation}
where the supremum above is taken over all rectangular subproblems,
i.e., all rectangles $H$ contained in
\smash{$P_{-1}(\cE_d(\BR_n'))$}.

To study rectangular subproblems, it helps to reintroduce the
multi-index notation for a location $i$ on
the $d$-dimensional grid, writing this as
\smash{$(i_1,\ldots,i_d) \in  \{1,\ldots,\ell\}^d$}, where
\smash{$\ell=n^{1/d}$}.
For a parameter $2 \leq \tau \leq \ell$, we consider rectangular
subsets of the form\footnote{Here, albeit unconvential, it helps to
  index \smash{$\beta \in H(\tau) \subseteq \R^{n-1}$}
  according to components $i=2,\ldots,n$, rather than
  $i=1,\ldots,n-1$.  This is so
  that we may keep the index variable $i$ to be in correspondence with
  positions on the grid.}
\begin{gather*}
H(\tau) = \big\{ \beta \in \R^{n-1}: |\beta_i| \leq t_i(\tau), \;
i=2,\ldots,n \big\}, \quad\text{where} \\
t_i(\tau) = \begin{cases}
  \BR_n'/(\sum_{j_1,\ldots,j_d \leq \tau}
  \rho_{j_1,\ldots,j_d})^{1/2} &
\text{if $i_1,\ldots,i_d \leq \tau$} \\
0 & \text{otherwise}
\end{cases}, \quad
\text{for $i=2,\ldots,n$}.
\end{gather*}
It is not hard to check that \smash{$H(\tau) \subseteq
\{ \beta \in \R^{n-1}: \sum_{i=2}^n \rho_i \beta_i^2 \leq (\BR_n')^2
\} = P_{-1}(\cE_d(\BR_n'))$}. Then, from \eqref{eq:rect_risk},
\begin{align*}
R(P_{-1}(\cE_d(\BR_n'))) \geq \sup_\tau \, R_L(H(\tau)) &=
\sup_\tau \; \frac{1}{n} \sum_{i=1}^n
\frac{t_i(\tau)^2 \sigma^2}{t_i(\tau)^2 + \sigma^2} \\
&= \sup_\tau \; \frac{1}{n}
\frac{(\tau^d-1) \sigma^2(\BR_n')^2}{(\BR_n')^2 +
\sum_{j_1,\ldots,j_d \leq \tau} \rho_{j_1,\ldots,j_d}}.
\end{align*}
The first equality is due to the fact that the minimax risk for
rectangles decouples across dimensions, and the 1d minimax linear risk
is straightforward to compute for an interval, as argued in the proof
Theorem \ref{thm:tv_minimax_linear}; the second equality simply comes
from a short calculation following the definition of $t_i(\tau)$,
$i=2,\ldots,n$.
Applying Lemma \ref{lem:lap_eigenvalues}, on the eigenvalues of the
graph Laplacian matrix $L$ for a $d$-dimensional grid, we have that
for a constant $c>0$,
\begin{equation*}
\frac{(\tau^d-1) \sigma^2(\BR_n')^2}{(\BR_n')^2 +
\sum_{j_1,\ldots,j_d \leq \tau} \rho_{j_1,\ldots,j_d}}
 \geq
\frac{(\tau^d-1) \sigma^2 (\BR_n')^2}{(\BR_n')^2 +
c\sigma^2 \tau^{d+2}/\ell^2}
\geq \frac{1}{2}
\frac{\sigma^2 (\BR_n')^2}{(\BR_n')^2 \tau^{-d} +
c\sigma^2 \tau^2/\ell^2}.
\end{equation*}
We can choose $\tau$ to maximize the expression on the right
above, given by
\begin{equation*}
\tau^* = \bigg(\frac{\ell^2
  (\BR_n')^2}{c\sigma^2}\bigg)^{\frac{1}{d+2}}.
\end{equation*}
When \smash{$2 \leq \tau^* \leq \ell$},
this provides us with the lower bound on the minimax risk
\begin{equation}
\label{eq:case1}
R(P_{-1}(\cE_d(\BR_n'))) \geq R_L(H(\tau^*)) \geq
\frac{1}{2n} \frac{\tau^d \sigma^2 (\BR_n')^2}
{2(c\sigma^2)^{\frac{d}{d+2}} (\BR_n')^\frac{4}{d+2}
\ell^{-\frac{2d}{d+2}}} = \frac{c_1}{n}
(n\sigma^2)^{\frac{2}{d+2}} (\BR_n')^{\frac{2d}{d+2}},
\end{equation}
for a constant $c_1>0$.  When \smash{$\tau^* < 2$}, we can use
$\tau=2$ as lower bound on the minimax risk,
\begin{equation}
\label{eq:case2}
R(P_{-1}(\cE_d(\BR_n'))) \geq R_L(H(2)) \geq \frac{1}{2n}
\frac{\sigma^2 \ell^2 (\BR_n')^2}{\ell^2 (\BR_n')^2 2^{-d} +
c\sigma^2 2^2} \geq \frac{c_2}{n} \ell^2 (\BR_n')^2,
\end{equation}
for a constant $c_2>0$, where in the last inequality, we used
the fact that \smash{$\ell^2 (\BR_n')^2 \leq c\sigma^2 2^{d+2}$} (just
a constant) since we are in the case \smash{$\tau^* < 2$}.  Finally,
when \smash{$\tau^* > \ell$}, we can use $\tau=\ell$ as a lower bound
on the minimax risk,
\begin{equation}
\label{eq:case3}
R(P_{-1}(\cE_d(\BR_n'))) \geq R_L(H(\ell)) \geq \frac{1}{2n}
\frac{\sigma^2 (\BR_n')^2}{\ell^{-d} (\BR_n')^2 + c\sigma^2} \geq
c_3 \sigma^2,
\end{equation}
for a constant $c_3>0$, where in the last inequality, we
used that \smash{$c\sigma^2 \leq \ell^{-d} (\BR_n')^2$} as we are in
the case \smash{$\tau^* > \ell$}. Taking a minimum of the lower bounds
in \eqref{eq:case1}, \eqref{eq:case2}, \eqref{eq:case3}, as a way to
navigate the cases, gives us a final lower bound on
\smash{$R(P_{-1}(\cE_d(\BR_n')))$}, and completes the proof.

\subsection{Proof of Theorem \ref{thm:le_ls} (Laplacian
  eigenmaps and Laplacian smoothing over Sobolev classes)}

We will prove the results for Laplacian eigenmaps and Laplacian
separately.

\paragraph{Laplacian eigenmaps.}  The smoother matrix for this
estimator is \smash{$S_k=V_{[k]} V_{[k]}^T$}, for a tuning parameter
$k=1,\ldots,n$.  From \eqref{eq:bias_var},
\begin{equation*}
\E\big[\MSE(\htheta^{\mathrm{LE}},\theta_0)\big] =
\frac{\sigma^2}{n} k +
\frac{1}{n} \| (I - S_k) \theta_0 \|_2^2.
\end{equation*}
Now we write \smash{$k=\tau^d$}, and analyze the max risk of the
second term,
\begin{align*}
\sup_{\theta_0 : \|D\theta_0\|_2 \leq \BR_n'}
\frac{1}{n} \| (I - S_k) \theta_0 \|_2^2
&= \sup_{z : \| z \|_2 \leq \BR_n'}
\frac{1}{n} \| (I - S_k) D^\dagger z \|_2^2 \\
&= \frac{(\BR_n')^2}{n}
\sigma^2_{\max} \big((I-S_k) D^\dagger \big) \\
&\leq \frac{(\BR_n')^2}{n}
\frac{1}{4 \sin^2( \pi \tau/ (2\ell))} \\
&\leq \frac{(\BR_n')^2}{n} \frac{4 \ell^2}{\pi^2 \tau^2}.
\end{align*}
Here we denote by \smash{$\sigma_{\max}(A)$} the maximum
singular value of a matrix $A$.
The last inequality above used the simple lower bound
$\sin(x) \geq x/2$ for $x \in [0,\pi/2]$.
The earlier inequality used that
\begin{equation*}
(I - S_k) D^\dagger = (I - V_{[k]} V_{[k]}^T) V^T (\Sigma^\dag)^{1/2}
U^T  = \big[0, \ldots, 0, V_{k+1}, \ldots, V_n\big]
(\Sigma^\dag)^{1/2} U^T,
\end{equation*}
where we have kept the same notation for the singular
value decomposition of $D$ as in the proof of Theorem
\ref{thm:sobolev_minimax}.
Therefore \smash{$\sigma^2_{\max}((I - S_k) D^\dagger)$} is the
reciprocal of the $(k+1)$st smallest eigenvalue $\rho_{k+1}$ of the graph Laplacian
$L$.  For any subset $A$ of the eigenvalues $\Lambda$ of the Laplacian, with $|A|=k$, $\rho_{k+1} \geq \min \Lambda \setminus A$. 
So for a $d$-dimensional grid, 
\begin{align*}
\rho_{k+1} 
&\geq \min \; \Lambda \setminus 
\{\rho_{i_1,\cdots,i_d} : 1\leq i_j\leq \tau \text{ for all } 1\leq j\leq d\} \\
&= 4\sin^2(\pi \tau / (2\ell))
\end{align*}
where recall \smash{$\ell=n^{1/d}$}, as explained by \eqref{eq:lap_eigenvalues}, in
the proof of Lemma \ref{lem:lap_eigenvalues}.

Hence, we have established
\begin{equation*}
\sup_{\theta_0 : \|D\theta_0\|_2 \leq \BR_n'}
\E\big[\MSE(\htheta^{\mathrm{LE}},\theta_0)\big] \leq
\frac{\sigma^2}{n} + \frac{\sigma^2}{n} \tau^d +
\frac{(\BR_n')^2}{n} \frac{4 \ell^2}{\pi^2 \tau^2}.
\end{equation*}
Choosing $\tau$ to balance the two terms on the right-hand side
above results in
\smash{$\tau^* = (2 \ell \BR_n' / (\pi \sigma))^{\frac{2}{d+2}}$}.
Plugging in this choice of $\tau$, while utilizing the bounds $1 \leq
\tau \leq \ell$, very similar to the arguments given at the end of the
proof of Theorem \ref{thm:sobolev_minimax}, gives the result for
Laplacian eigenmaps.

\paragraph{Laplacian smoothing.}  The smoother matrix for this
estimator is $S_\lambda = (I+\lambda L)^{-1}$, for a tuning parameter
$\lambda \geq 0$. From \eqref{eq:bias_var},
\begin{equation*}
\E\big[\MSE(\htheta^{\mathrm{LS}},\theta_0)\big] =
\frac{\sigma^2}{n}
\sum_{i=1}^n \frac{1}{(1+\lambda \rho_i)^2} +
\frac{1}{n} \| (I - S_\lambda) \theta_0 \|_2^2.
\end{equation*}
When $d=1,2,$ or $3$, the first term upper is bounded by 
\smash{$c_1\sigma^2/n+ c_2 \sigma^2/\lambda^{d/2}$}, 
for some constants $c_1,c_2>0$, by Lemma \ref{lem:ls_variance}. 
As for the second term,
\begin{align*}
\sup_{\theta_0 : \| D \theta_0 \|_2 \leq \BR_n'}
\frac{1}{n} \| (I - S_\lambda) \theta_0 \|_2^2
&= \sup_{z : \| z \|_2 \leq \BR_n'} \| (I - S_\lambda)
D^\dagger z \|_2^2 \\
&= \frac{(\BR_n')^2}{n} \sigma^2_{\max} \big(
  (I-S_\lambda) D^\dagger \big) \\
&= \frac{(\BR_n')^2}{n} \max_{i=2,\ldots,n} \;
\bigg( 1 - \frac{1}{1+\lambda \rho_i} \bigg)^2 \frac{1}{\rho_i} \\
&= \frac{(\BR_n')^2}{n} \lambda \max_{i=2,\ldots,n} \;
\frac{\lambda \rho_i}{(1+\lambda \rho_i)^2} \\
&\leq \frac{(\BR_n')^2 \lambda}{4n}.
\end{align*}
In the third equality we have used the fact the eigenvectors of
\smash{$I-S_\lambda$} are the left singular vectors of
\smash{$D^\dagger$}, and in the last inequailty we have used the
simple upper bound \smash{$f(x) = x/(1+x)^2 \leq 1/4$} for $x \geq 0$
(this function being maximized at $x=1$).

Therefore, from what we have shown,
\begin{equation*}
\sup_{\theta_0 : \|D\theta_0\|_2 \leq \BR_n'}
\E\big[\MSE(\htheta^{\mathrm{LS}},\theta_0)\big] \leq
\frac{c_1\sigma^2}{n}+ \frac{c_2 \sigma^2}{\lambda^{d/2}} +
\frac{(\BR_n')^2 \lambda}{4n}.
\end{equation*}
Choosing $\lambda$ to balance the two terms on the right-hand
side above gives
\smash{$\lambda^* = c (n/(\BR_n')^2)^{2/(d+2)}$}, for a constant
$c>0$. Plugging in this choice, and using upper bounds
from the trivial cases $\lambda=0$ and $\lambda=\infty$ when
\smash{$\BR_n'$} is very small or very large, respectively, gives the
result for Laplacian smoothing.
\hfill\qedsymbol

\begin{remark}
When $d=4$, Lemma \ref{lem:ls_variance} gives a slightly worse
upper bound on \smash{$\sum_{i=1}^n 1/(1+\lambda \rho_i)^2$}, with an
``extra'' term \smash{$(nc_2 / \lambda^{d/2})) \log(1+c_3\lambda)$},
for constants $c_2,c_3>0$. It is not hard to show, by tracing through
the same arguments as given above that we can use this to establish an
upper bound on the max risk of
\begin{equation*}
\sup_{\theta_0 \in \cS_d(\BR_n')}
\E \big[\MSE(\htheta^{\mathrm{LE}}, \theta_0)\big] \leq
\frac{c}{n} \Big( (n\sigma^2)^{\frac{2}{d+2}}
(\BR_n')^{\frac{2d}{d+2}} \log(n/(\BR_n')^2) \wedge n \sigma^2
\wedge n^{2/d}(\BR_n')^2 \Big) + \frac{c\sigma^2}{n},
\end{equation*}
only slightly worse than the minimax optimal rate, by a log factor.

When $d \geq 5$, our analysis provides a much worse bound for the max
risk of Laplacian smoothing, as the integral denoted
$I(d)$ in the proof of Lemma \ref{lem:ls_variance} grows very
large when $d \geq 5$.  We conjecture that this not due to slack
in our proof technique, but rather, to the Laplacian smoothing
estimator itself, since all inequalities the proof can be fairly
tight.
\end{remark}

\section{Utility lemmas used in the proofs of Theorems
  \ref{thm:sobolev_minimax} and \ref{thm:le_ls}}

This section contains some calculations on the partial sums of 
eigenvalues of the Laplacian matrix $L$, for $d$-dimensions
grids. These are useful for the proofs of both Theorem
\ref{thm:sobolev_minimax} and Theorem \ref{thm:le_ls}.

\begin{lemma}
\label{lem:lap_eigenvalues}
Let \smash{$L \in \R^{n\times n}$} denote the graph Laplacian matrix
of a $d$-dimensional grid graph, and $\rho_{i_1,\ldots,i_d}$,
\smash{$(i_1,\ldots,i_d) \in \{1,\ldots,\ell\}^d$} be its
eigenvalues, where
\smash{$\ell=n^{1/d}$}. Then there exists a constant $c>0$ (dependent
on $d$) such that, for any $1 \leq \tau \leq \ell$,
\begin{equation*}
\sum_{(i_1,\ldots,i_d) \in \{1,\ldots,\tau\}^d}
  \rho_{i_1,\cdots,i_d} \leq
c \frac{\tau^{d+2}}{\ell^2}.
\end{equation*}
\end{lemma}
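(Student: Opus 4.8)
The plan is to exploit the Cartesian product structure of the $d$-dimensional grid. Recall that a $d$-dimensional grid graph with side length $\ell = n^{1/d}$ is the Cartesian product of $d$ copies of the path graph on $\ell$ vertices, and that the Laplacian eigenvalues of a Cartesian product of graphs are exactly the sums of the eigenvalues of the factors. The path graph on $\ell$ vertices has Laplacian eigenvalues $\mu_j = 4\sin^2\!\big(\pi(j-1)/(2\ell)\big)$ for $j = 1,\ldots,\ell$ (the standard reflecting/free-boundary eigenvalues $2-2\cos(\pi(j-1)/\ell)$; this convention is consistent with the one used elsewhere in the paper, since then $\mu_{\tau+1} = 4\sin^2(\pi\tau/(2\ell))$, matching the expression in the Laplacian eigenmaps proof). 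Hence $\rho_{i_1,\ldots,i_d} = \sum_{k=1}^d \mu_{i_k}$.

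With this identity the sum over the box factorizes. Writing $\rho_{i_1,\ldots,i_d} = \sum_{k=1}^d \mu_{i_k}$ and summing over $(i_1,\ldots,i_d) \in \{1,\ldots,\tau\}^d$, for each fixed $k$ the sum of $\mu_{i_k}$ over the remaining $d-1$ coordinates (each ranging freely over $\{1,\ldots,\tau\}$) contributes a factor $\tau^{d-1}$, so by symmetry across the $d$ coordinates,
\begin{equation*}
\sum_{(i_1,\ldots,i_d) \in \{1,\ldots,\tau\}^d} \rho_{i_1,\ldots,i_d}
= d\,\tau^{d-1} \sum_{j=1}^{\tau} \mu_j .
\end{equation*}
It then remains only to bound the one-dimensional partial sum. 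Since $1 \le \tau \le \ell$, each argument $\pi(j-1)/(2\ell)$ with $j \le \tau$ lies in $[0,\pi/2)$, so the elementary inequality $\sin x \le x$ gives $\mu_j = 4\sin^2\!\big(\pi(j-1)/(2\ell)\big) \le \pi^2 (j-1)^2/\ell^2$, and therefore
\begin{equation*}
\sum_{j=1}^{\tau} \mu_j \le \frac{\pi^2}{\ell^2} \sum_{j=0}^{\tau-1} j^2 \le \frac{\pi^2}{3}\cdot\frac{\tau^{3}}{\ell^2}.
\end{equation*}
Combining the two displays yields the claim with $c = d\pi^2/3$ (a constant depending on $d$).

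There is no real obstacle here: the argument is a short computation. The only points needing care are (i) correctly recording the path-graph Laplacian spectrum and invoking the additivity of eigenvalues under graph Cartesian products, and (ii) keeping the index shift inside the sine ($j-1$ versus $j$) consistent with the convention used in the proof of Theorem~\ref{thm:le_ls}. Everything else is the bound $\sin x \le x$ together with the closed form for $\sum_{j} j^2$.
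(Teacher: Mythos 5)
Your proof is correct and follows essentially the same route as the paper's: both use the explicit product-form spectrum $\rho_{i_1,\ldots,i_d}=\sum_k 4\sin^2(\pi(i_k-1)/(2\ell))$, the bound $\sin x \le x$, and the symmetry of the sum over the box to reduce to $d\,\tau^{d-1}\sum_{j\le\tau}(j-1)^2$. Your version is just slightly more explicit about the Cartesian-product origin of the spectrum and carries the extra factor of $1/3$ in the constant, neither of which changes the argument.
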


\begin{proof}
The eigenvalues of $L$ can be written explicitly as
\begin{equation}
\label{eq:lap_eigenvalues}
\rho_i = 4 \sin^2\Big(\frac{\pi(i_1-1)}{2\ell}\Big) +
\ldots + 4 \sin^2\Big(\frac{\pi(i_d-1)}{2\ell}\Big), \quad
(i_1,\ldots,i_d) \in \{1,\ldots,\ell\}^d.
\end{equation}
This follows from known facts about the eigenvalues for the Laplacian
matrix of a 1d grid, and the fact that the Laplacin matrix for
higher-dimensional grids can be expressed in terms of a Kronecker sum
of the Laplacian matrix of an appropriate 1d grid (e.g.,
\cite{conte1980,kunsch1994,ng1999,wang2008,graphtf,hutter2016optimal}).
We now use the fact that $\sin(x) \leq x$ for all $x \geq 0$, which
gives us the upper bound
\begin{align*}
\sum_{(i_1,\ldots,i_d) \in \{1,\ldots,\tau\}^d}
\rho_{i_1,\cdots,i_d} & \leq
\frac{\pi^2}{\ell^2} \sum_{(i_1,\ldots,i_d) \in \{1,\ldots,\tau\}^d}
\Big((i_1-1)^2 + \ldots + (i_d-1)^2\Big) \\
&\leq \frac{\pi^2 d}{\ell^2} \tau^{d-1}
\sum_{i=1}^{\tau} (i-1)^2 \\
&\leq \frac{\pi^2 d}{\ell^2} \tau^{d-1} \tau^3 \\
&= \frac{\pi^2 d}{\ell^2} \tau^{d+2},
\end{align*}
as desired.
\end{proof}

\begin{lemma}
\label{lem:ls_variance}
Let \smash{$L \in \R^{n\times n}$} denote the graph Laplacian matrix
of a $d$-dimensional grid graph, and $\rho_i$, $i=1,\ldots,n$ be its
eigenvalues.  Let $\lambda \geq 0$ be arbitrary.
For $d=1,2,$ or $3$, there are constants $c_1,c_2>0$ such that 
\begin{equation*}
\sum_{i=1}^n \frac{1}{(1+\lambda \rho_i)^2}
\leq c_1 + c_2 \frac{n}{\lambda^{d/2}}.
\end{equation*}
For $d=4$, there are constants $c_1,c_2,c_3>0$ such that
\begin{equation*}
\sum_{i=1}^n \frac{1}{(1+\lambda \rho_i)^2}
\leq c_1 + c_2 \frac{n}{\lambda^{d/2}}
\Big( 1 + \log(1+c_3 \lambda) \Big).
\end{equation*}
\end{lemma}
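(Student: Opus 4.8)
The plan is to translate the sum over Laplacian eigenvalues into a sum over the lattice $\{0,1,\ldots,\ell-1\}^d$ (with $\ell=n^{1/d}$), and then compare that lattice sum to an integral; the whole point is that the resulting radial integral $\int^R r^{d-1}(1+r^2)^{-2}\,dr$ is $O(1)$ for $d\le 3$ but grows like $\log R$ exactly at $d=4$, which is the source of the extra logarithmic factor.

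First I would use the explicit eigenvalue formula \eqref{eq:lap_eigenvalues}: indexing the $n=\ell^d$ eigenvalues by $(i_1,\ldots,i_d)\in\{1,\ldots,\ell\}^d$, we have $\rho_{i_1,\ldots,i_d}=\sum_{j=1}^d 4\sin^2\!\big(\pi(i_j-1)/(2\ell)\big)$. Since each argument lies in $[0,\pi/2)$, Jordan's inequality $\sin x\ge (2/\pi)x$ gives $4\sin^2\!\big(\pi(i_j-1)/(2\ell)\big)\ge 4(i_j-1)^2/\ell^2$, hence $\rho_{i_1,\ldots,i_d}\ge (4/\ell^2)\|k\|_2^2$ where $k=(i_1-1,\ldots,i_d-1)$ runs over $\{0,\ldots,\ell-1\}^d$. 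Because $x\mapsto 1/(1+x)^2$ is decreasing, writing $\mu=4\lambda/\ell^2$ we obtain
\[
\sum_{i=1}^n \frac{1}{(1+\lambda\rho_i)^2}\;\le\;\sum_{k\in\{0,\ldots,\ell-1\}^d}\frac{1}{(1+\mu\|k\|_2^2)^2}.
\]
It remains to bound this lattice sum, for which I would split into the two regimes of the lemma. For $d\in\{1,2,3\}$ I would factorize: since the geometric mean is at most the maximum, $1+\mu\|k\|_2^2\ge \prod_{j}(1+\mu k_j^2)^{1/d}$, so $(1+\mu\|k\|_2^2)^{-2}\le\prod_j(1+\mu k_j^2)^{-2/d}$ and the sum factors as $\big(\sum_{a=0}^{\ell-1}(1+\mu a^2)^{-2/d}\big)^d$. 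Bounding the inner sum by $1+\int_0^\infty(1+\mu x^2)^{-2/d}\,dx=1+C_d\mu^{-1/2}$ — the integral converging precisely because $4/d>1$ — and expanding the $d$-th power while absorbing the lower-order powers $\mu^{-j/2}$ ($j<d$) via $t^{j/d}\le 1+t$, we get a bound of the form $c_1+c_2\mu^{-d/2}$. Since $\ell^d=n$, $\mu^{-d/2}=(\ell^2/(4\lambda))^{d/2}=n/(4\lambda)^{d/2}$, which is the asserted rate. For $d=4$ this factorization is too wasteful (it would produce a $(\log)^4$ rather than a single $\log$), so instead I would compare directly to a cutoff integral, using the standard $d$-dimensional bound $\sum_{k\in\{0,\ldots,\ell-1\}^d}g(k)\le\sum_{S\subseteq\{1,\ldots,d\}}\int_{[0,\ell]^{|S|}}g(x_S,0)\,dx_S$ for $g$ decreasing in each coordinate on the nonnegative orthant. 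After the rescaling $x=u/\sqrt\mu$, each term with $|S|\le 3$ contributes at most a constant times $\mu^{-|S|/2}\le 1+\mu^{-3/2}$ (finite integrals), while the $|S|=4$ term contributes $\mu^{-2}\int_{[0,\ell\sqrt\mu]^4}(1+\|u\|_2^2)^{-2}\,du\le \omega_4\,\mu^{-2}\int_0^{2\ell\sqrt\mu}\frac{r^3}{(1+r^2)^2}\,dr\le \tfrac{\omega_4}{2}\,\mu^{-2}\log(1+4\ell^2\mu)$. Using $\ell^4=n$ and $\ell^2\mu=4\lambda$, and absorbing $\mu^{-3/2}$ into $1+\mu^{-2}$ via $t^{3/4}\le 1+t$, this yields $c_1+c_2\,\tfrac{n}{\lambda^2}\big(1+\log(1+16\lambda)\big)$, matching the stated $d=4$ bound with $c_3=16$.

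The main obstacle is the $d=4$ borderline case. One must resist any power-mean or arithmetic--geometric splitting of $\|k\|_2^2$ across coordinates there, since such a split over-counts in a way that turns the single logarithm into a fourth power of a logarithm; the correct estimate keeps the sum organized radially and reduces to the integral $\int^R r^{d-1}(1+r^2)^{-2}\,dr$, which is bounded for $d\le 3$ and $\Theta(\log R)$ when $d=4$. The only other mildly delicate point is the $d$-dimensional sum-to-integral comparison, whose ``face'' terms $S\subsetneq\{1,\ldots,d\}$ have to be carried along but are harmless, contributing only strictly lower-order powers of $\mu$.
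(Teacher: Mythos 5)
Your proof is correct, and its two halves relate to the paper's argument differently. For $d=4$ you follow essentially the paper's route: lower-bound $\rho_i$ by a quadratic in the multi-index via Jordan's inequality, compare the lattice sum to an integral, pass to spherical coordinates, and use that $\int_0^R r^3(1+r^2)^{-2}\,dr=\Theta(\log R)$. You are in fact more careful than the paper at the sum-to-integral step: the paper's bound ``$\,\le 1+\int_{[0,\ell]^d}\cdots$'' quietly drops the lower-dimensional face contributions (the multi-indices with some coordinate equal to $0$ are not right endpoints of cells inside $[0,\ell]^d$), whereas you carry the $S\subsetneq\{1,\ldots,d\}$ terms explicitly and check they are lower order. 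For $d\le 3$ you take a genuinely different and more elementary route: rather than evaluating the radial integral $I(d)$ symbolically, you use AM--GM to get $(1+\mu\|k\|_2^2)^{-2}\le\prod_j(1+\mu k_j^2)^{-2/d}$, which factorizes the $d$-dimensional sum into the $d$-th power of a one-dimensional sum whose comparison integral $\int_0^\infty(1+u^2)^{-2/d}\,du$ converges exactly when $d\le 3$. This buys an argument with no symbolic integration, at the cost of requiring a separate treatment of $d=4$ (as you correctly note, the factorization would inflate the single logarithm to a fourth power there); the paper's single spherical-coordinates computation handles $d=1,2,3,4$ uniformly and also makes transparent why the bound degrades for $d\ge5$. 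Both proofs ultimately rest on the same dichotomy --- $\int^R r^{d-1}(1+r^2)^{-2}\,dr$ bounded for $d\le3$, logarithmic at $d=4$ --- so your $d\le3$ argument is best viewed as an alternative derivation of the same estimate rather than a new idea, but it is a clean and valid one.
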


\begin{proof}
We will use the explicit form of the eigenvalues as given in the proof
of Lemma \ref{lem:lap_eigenvalues}.  In the expressions below, we use
$c>0$ to denote a constant whose value may change from line to line.
Using the inequality $\sin x \geq x/2$ for $x \in [0,\pi/2]$,
\begin{align*}
\sum_{i=1}^n \frac{1}{(1+\lambda \rho_i)^2}
&\leq \sum_{(i_1,\ldots,i_d) \in \{1,\ldots,\ell\}^d}
\frac{1}{\big(1 + \lambda \frac{\pi^2}{4\ell^2}
\sum_{j=1}^d  (i_j-1)^2 \big)^2} \\
&\leq 1 + \int_{[0,\ell]^d}
\frac{1}{\big(1 + \lambda \frac{\pi^2}{4}
\sum_{j=1}^d x_j^2 / \ell^2 \big)^2} \, dx \\
&= 1 + c \int_0^{\ell\sqrt{d}}
\frac{1}{\big(1 + \lambda \frac{\pi^2}{4}
r^2 / \ell^2 \big)^2} r^{d-1} \, dr \\
&= 1 + c \frac{n}{\lambda^{d/2}}
\underbrace{\int_0^{\frac{\pi}{2} \sqrt{\lambda d}}
\frac{u^{d-1}}{(1 + u^2)^2} \, du}_{I(d)}.
\end{align*}
In the second inequality, we used the fact that the right-endpoint
Riemann sum is always an underestimate for the integral of a function
that is monotone nonincreasing in each coordinate.  In the third, we
made a change to spherical coordinates, and suppressed all of the
angular variables, as they contribute at most a constant factor.  It
remains to compute $I(d)$, which can be done by symbolic integration:
\begin{align*}
I(1) &= \frac{\pi \sqrt{d}}{4\big(1+\frac{\pi^2}{4}\lambda d\big)} + 
\frac{1}{2} \tan^{-1}
\Big(\frac{\pi}{2} \sqrt{\lambda d}\Big) \leq \frac{1}{4} + \frac{\pi}{4}, \\
I(2) &= \frac{1}{2} - \frac{1}{2\big(1+\frac{\pi^2}{4}\lambda d\big)}
\leq \frac{1}{2} ,\\
I(3) &= \frac{1}{2} \tan^{-1}
\Big(\frac{\pi}{2} \sqrt{\lambda d}\Big) \leq \frac{\pi}{4}, 
\quad \text{and} \\
I(4) &= \frac{1}{2} \log \Big(1+ \frac{\pi^2}{4} \lambda d \Big) +
\frac{1}{2\big(1+ \frac{\pi^2}{4} \lambda d\big)} - \frac{1}{2} \leq
\frac{1}{2} \log \Big(1+ \frac{\pi^2}{4} \lambda d \Big) + \frac{1}{2}.
\end{align*}
This completes the proof.
\end{proof}

\section{Additional experiments comparing TV denoising and Laplacian
  smoothing for piecewise constant functions} 

\begin{figure}[H]
  \centering
  \begin{tabular}{cc}
    Piecewise constant signal in 2d &
    Piecewise constant signal in 3d \\
    {\includegraphics[width=0.45\textwidth]{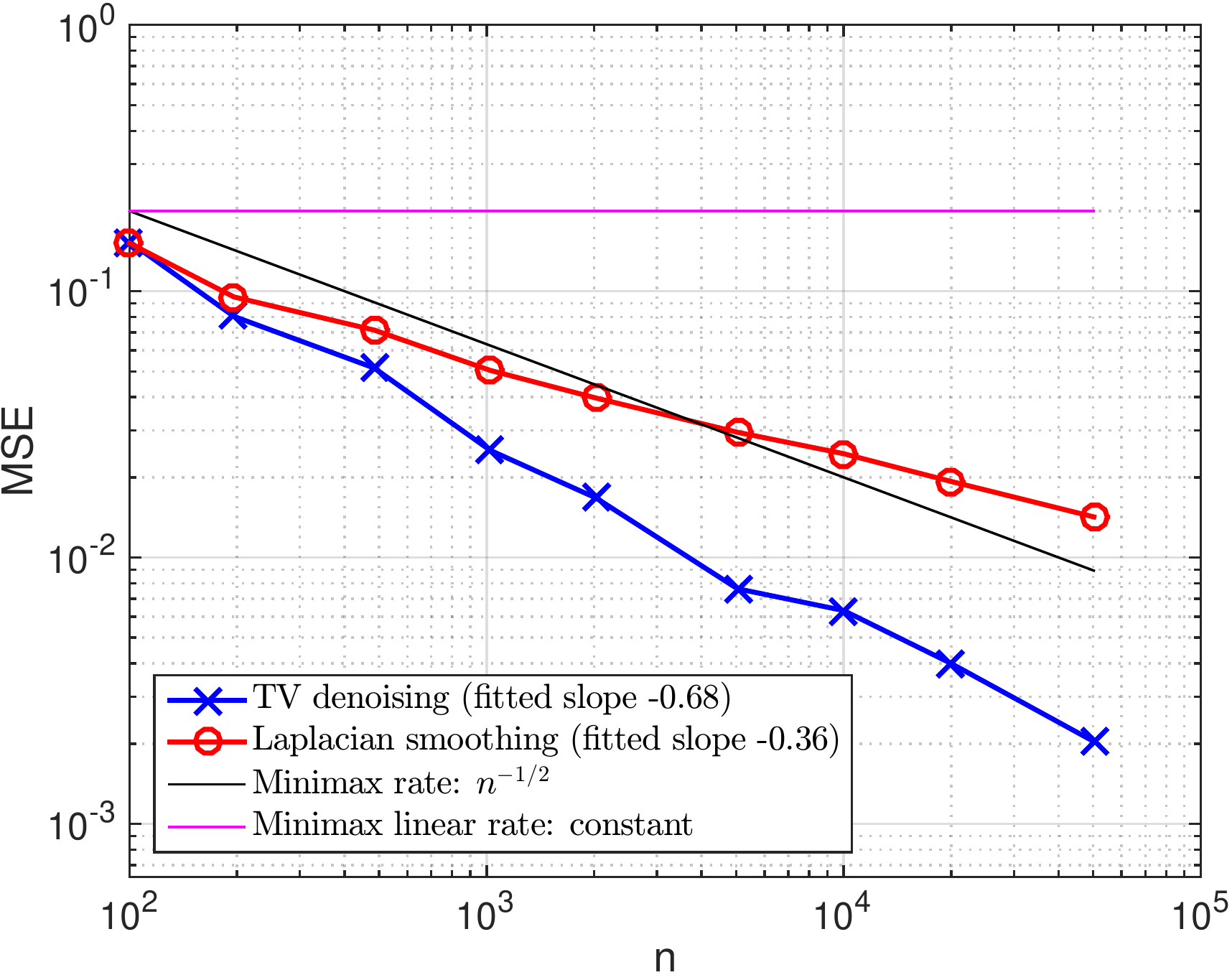}} &
    {\includegraphics[width=0.45\textwidth]{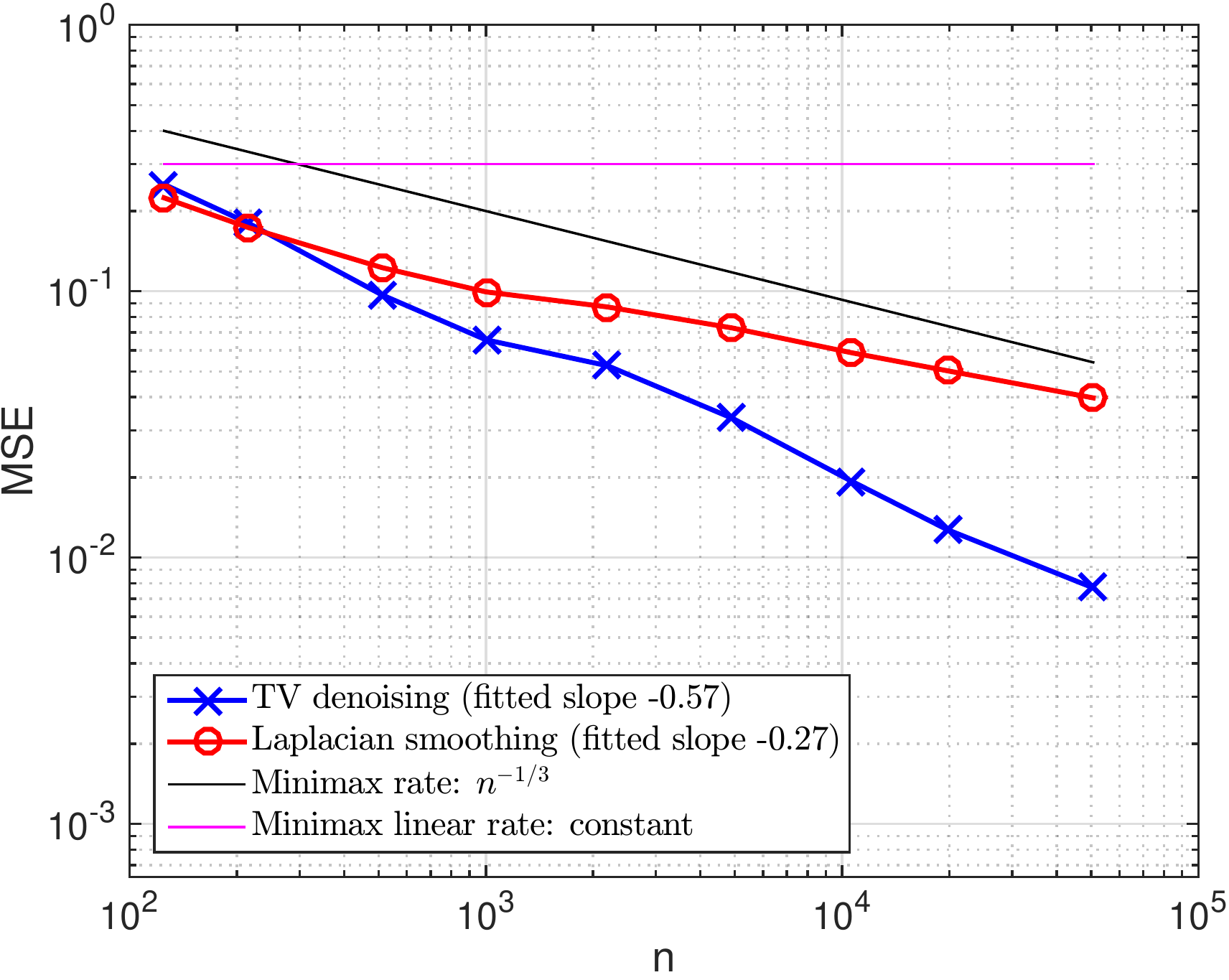}}
  \end{tabular}
\vspace{-5pt}
\caption{\small\it MSE curves for estimating a ``piecewise constant''
  signal, having a single elevated region, over 2d and 3d grids.  For 
  each $n$, the results were averaged over 5 repetitions, and the
  Laplacian smoothing and TV denoising estimators were tuned for best
  average MSE performance.  We set $\theta_0$ to satisfy
  \smash{$\|D\theta_0\|_1 \asymp n^{1-1/d}$}, matching the canonical
  scaling.  Note that all estimators achieve better performance than
  that dictated by their minimax rates.}   
\label{fig:piecewise_const} 
\end{figure}

\bibliographystyle{plainnat}
\bibliography{ryantibs}

\end{document}